\documentclass[11pt,reqno]{amsart}
\usepackage[T1]{fontenc}
\usepackage{lmodern}
\usepackage{amsmath,amssymb,amsthm,mathtools,mathrsfs}
\usepackage[a4paper,margin=27mm]{geometry}
\usepackage{booktabs,needspace}
\usepackage{xcolor}

\usepackage[colorlinks=true,linkcolor=blue,citecolor=blue,urlcolor=blue]{hyperref}
\allowdisplaybreaks
\newtheorem{theorem}{Theorem}[section]
\newtheorem{proposition}[theorem]{Proposition}
\newtheorem{lemma}[theorem]{Lemma}
\newtheorem{corollary}[theorem]{Corollary}
\theoremstyle{definition}

\newtheorem{remark}[theorem]{Remark}
\numberwithin{equation}{section}

\title[Level two crank moments]{The Algebra of Parity-Twisted Crank Moments\\
and a Prime-Detecting Expression}
\author{Soon-Yi Kang}
\address{Department of Mathematics, Kangwon National University,
Chuncheon, Gangwon-do 24341, South Korea}
\email{sy2kang@kangwon.ac.kr}
\keywords{crank moments, quasi-modular forms, congruences, prime detection}
\subjclass[2020]{11F11, 11F25, 11F30, 11F33, 11P82}
\begin{document}

\begin{abstract}
We determine the algebra generated by the normalized parity-twisted crank moments and describe it explicitly as a proper subalgebra of the quasi-modular forms on $\Gamma_0(2)$ closed under $D=q\frac{d}{dq}$.
We derive congruences from differential identities and construct a prime-detecting expression involving only
the normalized twisted second moment, its products, and its derivatives.
Its coefficient of $q^n$ vanishes if and only if $n$ is prime, for every $n\ge2$.
We prove that its highest weight, eight, is minimal among prime-detecting elements of this algebra.
Finally, we identify the same algebra as the one generated by MacMahon's functions $B_k$, obtaining a corresponding prime-detecting expression in $B_1$.
\end{abstract}
\maketitle


\section{Introduction}

For a positive integer $k$, the Eisenstein series of weight $2k$ is defined by
\[
 E_{2k}(\tau)
 =1-\frac{4k}{B_{2k}}\sum_{n=1}^{\infty}\sigma_{2k-1}(n)q^n,
 \qquad q=e^{2\pi i\tau},
\]
where $\tau$ lies in the upper half-plane, $B_{2k}$ is the $2k$-th Bernoulli number, and
$\sigma_j(n)=\sum_{d\mid n}d^j$.
For $k\ge2$, the series $E_{2k}$ is a modular form of weight $2k$ on $\mathrm{SL}_2(\mathbb Z)$.
The weight $2$ Eisenstein series $E_2$, however, is not a modular form. Its transformation law contains an additional correction term, making it a quasi-modular form.

In fact, the algebra of quasi-modular forms on $\mathrm{SL}_2(\mathbb Z)$, denoted by
$\widetilde{\mathcal M}(\mathrm{SL}_2(\mathbb Z))$, is obtained by adjoining $E_2$ to the algebra of modular forms:
\[
 \widetilde{\mathcal M}(\mathrm{SL}_2(\mathbb Z))
 =\mathcal M(\mathrm{SL}_2(\mathbb Z))[E_2]
 =\mathbb C[E_2,E_4,E_6],
\]
where $\mathcal M(\mathrm{SL}_2(\mathbb Z))$ denotes the algebra of modular forms on 
$\mathrm{SL}_2(\mathbb Z)$.
A quasi-modular form of weight $k$ is therefore a weighted homogeneous polynomial of weight $k$ in $E_2,E_4,E_6$.  We also consider finite sums of forms of different weights, which we call mixed-weight quasi-modular forms.
A fundamental property of this algebra is its closure under the differential operator
\[
 D=\frac{1}{2\pi i}\frac{d}{d\tau}=q\frac{d}{dq},
\]
as follows from Ramanujan's differential identities:
\begin{equation}\label{eq:Ramanujan}
 DE_2=\frac{E_2^2-E_4}{12},\qquad
 DE_4=\frac{E_2E_4-E_6}{3},\qquad
 DE_6=\frac{E_2E_6-E_4^2}{2}.
\end{equation}

Quasi-modular forms also arise as generating functions in partition theory and enumerative geometry. Examples include generating functions counting covers of elliptic curves and orbifolds, and
curves on Abelian surfaces. We refer to Rhoades \cite[Section 1]{Rhoades} for examples and references.
More recently, Goujard--M\"oller \cite{GM} established quasi-modularity for generating functions counting torus covers with Siegel--Veech weights.

The arithmetic properties of quasi-modular forms provide another reason to study quasi-modular generating functions.
Quasi-modular forms on $\mathrm{SL}_2(\mathbb Z)$ of a fixed weight and with rational
Fourier coefficients are $p$-adic modular forms in the sense of Serre \cite{Serre}; see also \cite[Section~3.2]{GJT}.
For example,
\[
 E_2\equiv E_{p+1}\pmod p.
\]

This $p$-adic viewpoint provides a way to derive congruences for the coefficients of quasi-modular generating functions using the theory of modular forms.

Another arithmetic application concerns prime detection.
Leli\`evre's divisor-sum criteria lead to prime-detecting expressions involving Eisenstein series and their derivatives. This approach was used by Craig--van Ittersum--Ono \cite{CvIO} for MacMahon's
partition functions and by the author, together with Matsusaka and Shin \cite{KMS}, for their variants.

These arithmetic applications motivate the study of the algebras generated by families of quasi-modular forms.
Beyond the quasi-modularity of individual generating functions, one may ask which subalgebra of the full algebra of quasi-modular forms a family generates, how the generated algebra behaves under differentiation, and which arithmetic expressions it contains.

Another family of quasi-modular forms arises from the moments of a partition statistic called the crank.
Atkin and Garvan \cite{AG} established the quasi-modularity of normalized ordinary crank moment generating functions, and Rhoades \cite{Rhoades} extended this study to twisted crank moments.
In joint work with Kim and Lee \cite{KKL}, the author showed that the normalized ordinary crank moments generate $\mathbb C[E_2,E_4,E_6]$.

The present paper investigates the algebra generated by normalized crank moments twisted by the parity of the crank.
We ask which subalgebra of the quasi-modular forms on $\Gamma_0(2)$ these moments generate, whether it is closed under $D$, and whether it contains prime-detecting expressions.

We determine this algebra explicitly and show that it is a proper subalgebra of $\widetilde{\mathcal M}(\Gamma_0(2))$ closed under $D$.
Within this algebra, we construct a prime-detecting expression involving only the normalized twisted second
moment, its products, and its derivatives.
The expression detects all primes. 
Its highest weight is eight, which we prove is the smallest possible in this algebra.

We also identify the twisted crank algebra with the algebra generated by MacMahon's functions $B_k$.
As a consequence, our prime-detecting expression can be written in terms of $B_1$ and its derivatives, answering a question left open at the end of Section~4 of \cite{KMS}.

Section~\ref{sec:moments} introduces the twisted crank moments and
states the main results. Section~\ref{sec:structure} determines their
algebra and proves its closure under $D$.
Section~\ref{sec:congruences} establishes congruences for the normalized
moments, and Section~\ref{sec:primes} proves the prime-detection theorem
and the minimality of the highest weight.
Finally, Section~\ref{sec:comparison} identifies the twisted crank
algebra with the algebra generated by MacMahon's functions $B_k$
and relates their differential identities. It also states the resulting prime-detecting expression
in $B_1$ explicitly.


\section{Twisted crank moments and main results}
\label{sec:moments}

For a nonempty partition $\lambda$, let $\mu(\lambda)$ denote the number of ones in $\lambda$, and let $\nu(\lambda)$ denote the number of parts strictly larger than $\mu(\lambda)$.
The crank is defined by
\[
 \operatorname{crank}(\lambda):=
 \begin{cases}
  \text{the largest part of $\lambda$},
     &\text{if }\mu(\lambda)=0,\\
  \nu(\lambda)-\mu(\lambda),
     &\text{if }\mu(\lambda)>0.
 \end{cases}
\]
For $m\in\mathbb Z$ and $n\ge2$, let $M(m,n)$ denote the number of partitions of $n$ with crank $m$. With the conventions
\[
 M(0,0)=1,\qquad
 M(-1,1)=M(1,1)=1,\qquad M(0,1)=-1,
\]
and all remaining coefficients for $n=0,1$ equal to zero, the two-variable crank generating function is given by \cite{AG1988,Garvan}
\begin{equation}\label{eq:intro-crank-gf}
 C(\zeta;q)
 :=\sum_{n\ge0}\sum_{m\in\mathbb Z}M(m,n)\zeta^m q^n
 =\frac{(q)_\infty}
 {(\zeta q)_\infty(\zeta^{-1}q)_\infty},
\end{equation}
where
\[
 (a)_\infty:=(a;q)_\infty
 :=\prod_{j\ge0}(1-aq^j).
\]

For each nonnegative integer $r$, define
\[
 C_r(\zeta;q)
 :=\left(\zeta\frac{\partial}{\partial\zeta}\right)^r
 C(\zeta;q)
 =\sum_{n\ge0}\sum_{m\in\mathbb Z}
 m^rM(m,n)\zeta^m q^n.
\]
We normalize these functions by setting
\[
 \mathbf{C}_r(\zeta;q)
 :=\frac{C_r(\zeta;q)}{C(\zeta;q)}.
\]
At $\zeta=1$, these are the normalized ordinary crank moments; at $\zeta=-1$, they are the normalized
crank moments twisted by the parity of the crank.  Indeed,
\[
 C_r(-1;q)
 =\sum_{n\ge0}\left(\sum_{m\in\mathbb Z}
 (-1)^m m^rM(m,n)\right)q^n.
\]
The symmetry $M(-m,n)=M(m,n)$ implies that the odd moments vanish at both specializations. 
The product formulas
\[
 \frac{1}{C(1;q)}=(q;q)_\infty,
 \qquad
 \frac{1}{C(-1;q)}
 =\frac{(-q;q)_\infty^2}{(q;q)_\infty}
\]
show that both reciprocals have integer coefficients.
Since $C_{2k}(\pm1;q)\in\mathbb Z[[q]]$, we obtain
\[
 \mathbf C_{2k}(\pm1;q)\in\mathbb Z[[q]].
\]
For $k\ge1$, these series have zero constant term, since the empty partition contributes zero to every
positive-order moment.

Atkin and Garvan \cite[Theorem~4.2]{AG} proved that $\mathbf{C}_{2k}(1;q)$ is a mixed-weight quasi-modular form on $\mathrm{SL}_2(\mathbb Z)$ of highest weight at most $2k$. They used this structure to derive identities and congruences involving crank moments.
In joint work with Kim and Lee \cite{KKL}, the author showed that the normalized ordinary crank moments and the Eisenstein series generate the same $\mathbb Q$-algebra. 
Consequently,
\[
 \mathscr C^{(1)}:=\mathbb C[\mathbf{C}_{2k}(1;q):k\ge1]
 =\widetilde{\mathcal M}(\mathrm{SL}_2(\mathbb Z))
 =\mathbb C[E_2,E_4,E_6].
\]

For the specialization $\zeta=-1$, Rhoades \cite[Theorem~3.1]{Rhoades} proved that
$\mathbf{C}_{2k}(-1;q)$ is a mixed-weight quasi-modular form on $\Gamma_0(2)$ of highest weight at most $2k$.
Using this quasi-modularity, he proved that, for each positive even moment order, there are infinitely many
primes for which the twisted crank moments satisfy congruences along infinitely many non-nested
arithmetic progressions \cite[Corollary~3.2]{Rhoades}.

We consider the algebra
\[
  \mathscr C^{(2)}
 :=\mathbb C[\mathbf{C}_{2k}(-1;q):k\ge1]
 \subseteq\widetilde{\mathcal M}(\Gamma_0(2)).
\]
Let
\[
 E_{2,2}(\tau):=2E_2(2\tau)-E_2(\tau),
\]
which is a modular form of weight two on $\Gamma_0(2)$.
Since
\[
 \mathcal M(\Gamma_0(2))=\mathbb C[E_{2,2},E_4],
\]
the algebra of quasi-modular forms on $\Gamma_0(2)$ is
\[
 \widetilde{\mathcal M}(\Gamma_0(2))
 =\mathcal M(\Gamma_0(2))[E_2]
 =\mathbb C[E_2,E_{2,2},E_4].
\]

Our first theorem describes the algebra $\mathscr C^{(2)}$.

\begin{theorem}\label{thm:structure}
We have
\[
  \mathscr C^{(2)}
 =\mathbb C[E_2+2E_{2,2}]
 +(4E_{2,2}^2-E_4)\,
   \widetilde{\mathcal M}(\Gamma_0(2))
 \subsetneq\widetilde{\mathcal M}(\Gamma_0(2)),
\]
and
\[
 D(\mathscr C^{(2)})\subseteq\mathscr C^{(2)}.
\]
Moreover, adjoining the normalized ordinary second crank moment gives the full algebra:
\[
 \widetilde{\mathcal M}(\Gamma_0(2))
 =\mathbb C[
   \mathbf{C}_2(1;q),
   \mathbf{C}_2(-1;q),
   \mathbf{C}_4(-1;q)].
\]
\end{theorem}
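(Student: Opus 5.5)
The plan is to turn the moments into power sums of a single family of Eisenstein-type series, and then do all the work inside $\widetilde{\mathcal M}(\Gamma_0(2))=\mathbb C[E_2,E_{2,2},E_4]$. Put $\zeta=-e^{z}$ in \eqref{eq:intro-crank-gf}. Since $(\zeta\partial_\zeta)^r=\partial_z^r$, we get
\[
\frac{C(-e^{z};q)}{C(-1;q)}=\sum_{r\ge0}\mathbf C_r(-1;q)\frac{z^r}{r!}.
\]
Expanding the logarithm of the product gives
\[
\log\frac{C(-e^z;q)}{C(-1;q)}=\sum_{m,n\ge1}\frac{(-1)^m}{m}\bigl(2\cosh(mz)-2\bigr)q^{mn}=\sum_{k\ge1}\frac{z^{2k}}{(2k)!}\,g_k,
\]
where
\[
g_k:=2\sum_{m,n\ge1}(-1)^m m^{2k-1}q^{mn}=2\bigl(2^{2k}S_{2k-1}(q^2)-S_{2k-1}(q)\bigr),\qquad S_j(q)=\sum_{n\ge1}\sigma_j(n)q^n .
\]
Thus $\mathbf C_{2k}(-1;q)$ is the complete Bell polynomial in $g_1,\dots,g_k$. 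This system is unitriangular with rational coefficients, so it can be inverted, and therefore $\mathscr C^{(2)}=\mathbb C[g_k:k\ge1]$.

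For $k=1$, the identity $4E_2(2\tau)=2(E_2+E_{2,2})$ gives $g_1=\tfrac1{12}\bigl(3-(E_2+2E_{2,2})\bigr)$. Hence $\mathbb C[g_1]=\mathbb C[E_2+2E_{2,2}]$, which already accounts for the whole weight-$2$ part. For $k\ge2$, $g_k$ is a nonzero constant plus a modular form $F_k$ of weight $2k$ on $\Gamma_0(2)$, namely a combination of $E_{2k}(2\tau)$ and $E_{2k}(\tau)$. Write $h:=4E_{2,2}^2-E_4$ and $R:=\mathbb C[E_2+2E_{2,2}]+h\,\widetilde{\mathcal M}(\Gamma_0(2))$. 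Since $h\widetilde{\mathcal M}$ is an ideal and $\mathbb C[E_2+2E_{2,2}]$ is a ring, $R$ is a subalgebra.

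\emph{Inclusion $\mathscr C^{(2)}\subseteq R$.} It suffices to prove $h\mid F_k$ for $k\ge2$, i.e.\ that $F_k\bmod h$, which is a pure polynomial in $E_{2,2}$, vanishes identically. I would show this through the Jacobi form. Up to the factor $e^{z/2}$-type normalizations, $C(-e^z;q)/C(-1;q)$ is $\vartheta_2(z)/\vartheta_2(0)$, i.e.\ a ratio of theta functions at the half period. Its logarithmic second derivative is $-\wp(z+\tfrac12)+e_1$ plus a multiple of $E_2$, where $e_1=\wp(\tfrac12)$ is a constant multiple of $E_{2,2}$. Consequently, for $k\ge2$ the $g_k$ are, up to constants, Taylor coefficients of $\wp(z+\tfrac12)$, and these are polynomials in $e_1$ and $(e_1-e_2)(e_1-e_3)$. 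The latter quantity equals $3e_1^2-g_2/4$, which is a constant multiple of $4E_{2,2}^2-E_4=h$. Every Taylor coefficient of order $\ge2$ of $\wp(z+\tfrac12)$ is divisible by $\wp'$-type factors, and the relation $\wp''=6\wp^2-g_2/2$ evaluated at $e_1$ produces exactly $h$. Hence $h\mid F_k$. Checking this divisibility cleanly, in particular matching the normalizing constants, is a routine but essential computation.

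\emph{Inclusion $R\subseteq\mathscr C^{(2)}$, and $D$-closure.} This is the step I expect to be the main obstacle. First, a direct computation of $g_2$ shows that $h$ lies in $\mathscr C^{(2)}$ modulo $\mathbb C[g_1]$. Next, I would prove $D(\mathscr C^{(2)})\subseteq\mathscr C^{(2)}$ using \eqref{eq:Ramanujan} together with $DE_{2,2}=\tfrac1{6}(E_2E_{2,2}-2E_{2,2}^2+\cdots)$, the standard level-two Ramanujan identity. Alternatively, $D$-closure follows from the heat equation for $\vartheta_2$, which expresses $D$ of the generating function through $\partial_z^2$ and $g_1$; this gives $Dg_k\in\mathbb C[g_1,\dots,g_{k+1}]$ directly. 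With $D$-closure in hand, I would generate the ideal $h\widetilde{\mathcal M}$ degree by degree. From $h$ and $Dh$ one obtains $hE_2$ and $hE_{2,2}$ modulo lower terms. Then an induction on weight, using the $g_k$ to supply $hE_{2,2}^{k-2}$ (the leading coefficient of $F_k/h$ in $E_{2,2}$ must be shown nonzero), using $D$ to raise the power of $E_2$, and using $E_4h=4E_{2,2}^2h-h^2$, yields every $h\,E_2^aE_{2,2}^bE_4^c$.

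Properness is clear: the weight-$2$ part of $R$ is spanned by $1$ and $E_2+2E_{2,2}$, so $E_2\notin\mathscr C^{(2)}$. For the final claim, note that $\mathbf C_2(1;q)=g$-analogue $=(1-E_2)/12$ up to normalization, which gives $E_2$. Together with $\mathbf C_2(-1;q)$ this gives $E_{2,2}$. Finally, $\mathbf C_4(-1;q)$ gives $h$ modulo $\mathbb C[E_2,E_{2,2}]$, and hence $E_4=4E_{2,2}^2-h$.
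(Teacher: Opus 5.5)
Your route differs from the paper's in two places, and both of those parts are sound. For the divisibility $h\mid F_k$ you expand $\wp$ at the $2$-torsion point. The paper instead uses the Fricke involution: $\mathcal W_2(G_{2k})$ has zero constant term, hence is divisible by $E_4-E_{2,2}^2$, and applying $\mathcal W_2$ again gives divisibility of $G_{2k}$ by $4E_{2,2}^2-E_4$.

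Your version becomes precise as follows. With $P(x)=4x^3-g_2x-g_3$, write $\wp^{(2j)}=Q_j(\wp)$, so $Q_0(x)=x$ and $Q_{j+1}=\tfrac12P'Q_j'+PQ_j''$. Since $P(e_1)=0$ and $P'(e_1)=4\delta$, where $\delta=(e_1-e_2)(e_1-e_3)$ is a constant multiple of $h$, you get $Q_{j+1}(e_1)=2\delta\,Q_j'(e_1)$ with $Q_j'(e_1)\in\mathbb Q[e_1,\delta]$.

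For $D$-closure you use the heat equation before knowing the structure of $\mathscr C^{(2)}$. The paper proves the structure first and then reads off closure from $D(E_2+2E_{2,2})=\tfrac1{12}\bigl((E_2+2E_{2,2})^2-3h\bigr)$ and $Dh=\tfrac13(E_2-E_{2,2})h$. The heat equation does give $Dg_k\in\mathbb Q[g_1,\dots,g_{k+1}]$; for example, it yields $Dg_1=\tfrac12(g_1-2g_1^2+g_2)$, which matches the paper. Your first suggestion, via Ramanujan's identities, would be circular at that stage, because it presupposes knowing which quasi-modular forms lie in $\mathscr C^{(2)}$.

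The genuine gap is the reverse inclusion $R\subseteq\mathscr C^{(2)}$. Your induction obtains $hE_{2,2}^{k-2}$ from $g_k$ only under the unproved hypothesis that the coefficient of $E_{2,2}^{k-2}h$ in your $F_k$ is nonzero. Without it, nothing puts $hE_{2,2}^{k-2}$ in $\mathscr C^{(2)}$. This is exactly where the paper does its work: $\mathcal W_2(G_{2k})=2^kq+O(q^2)$ forces $a_{k,1}=(-1)^k2^k/192\neq0$. Also, in weight six, $h$ and $Dh$ alone give only $h(E_2-E_{2,2})$; you need $Xh$ as well, with $X=E_2+2E_{2,2}$.

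You can close the gap with your own tools, in either of two ways.

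\emph{Via the $\wp$ recursion.} Differentiating the recursion gives the exact identity $Q_{j+1}'(e_1)=12e_1Q_j'(e_1)+6\delta\,Q_j''(e_1)$. Hence $Q_j(e_1)\equiv 2(12e_1)^{j-1}\delta\pmod{\delta^2}$ for $j\ge1$, which is the required nonvanishing.

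\emph{Via $D$-closure, which makes the coefficient unnecessary.} From $DE_{2,2}=\tfrac16(E_{2,2}X-h)$ and $Dh=\tfrac13(X-3E_{2,2})h$ one gets
\[
hE_{2,2}^{n+1}=\Bigl(\tfrac13+\tfrac n6\Bigr)X\cdot hE_{2,2}^{n}-\tfrac n6\,h\cdot hE_{2,2}^{n-1}-D\bigl(hE_{2,2}^{n}\bigr),
\]
where the middle term is absent for $n=0$. Since $X=3-12g_1$ and $h=24g_2+3$ lie in $\mathscr C^{(2)}$, induction on $n$ puts every $hE_{2,2}^n$ in $\mathscr C^{(2)}$. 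Multiplying by powers of $X$ and $h$ then gives all of $h\,\widetilde{\mathcal M}(\Gamma_0(2))=h\,\mathbb C[X,E_{2,2},h]$.

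The rest of your plan is correct and matches the paper in substance: properness by comparing weight-two parts, and the final inversion using $\mathbf C_2(1;q)$.
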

As an arithmetic application of the differential identities in $\mathscr C^{(2)}$, we obtain the following congruences for the coefficients of the normalized twisted crank moments.

\begin{proposition}\label{prop:crank-congruences}
We have
\[
 \mathbf{C}_4(-1;q)
 \equiv(2D-1)\mathbf{C}_2(-1;q)\pmod5.
\]
Writing
\[
 \mathbf{C}_{2k}(-1;q)
 =\sum_{n\ge0}c_{2k}(n)q^n,
\]
we have, for every $j\ge1$ and $n\ge0$,
\[
 c_{4j}(5n+3)\equiv0\pmod5.
\]
\end{proposition}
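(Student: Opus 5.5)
We get both assertions from one exact identity in weight $\le 4$. We reduce that identity modulo $5$, and then pass from the fourth moment to all orders $4j$ by an elementary argument on the moments themselves.

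\emph{Step 1: cumulants.} Put $L=\log C(\zeta;q)$. From the product \eqref{eq:intro-crank-gf} we have $-\log(1-\zeta q^j)=\sum_{m\ge1}\zeta^m q^{jm}/m$. Hence
\[
(\zeta\partial_\zeta)^s L=\sum_{j,m\ge1} m^{s-1}\bigl(\zeta^m-(-1)^{s-1}\zeta^{-m}\bigr)q^{jm}.
\]
At $\zeta=-1$ this vanishes for odd $s$. For $s=2k$ it equals
\[
\kappa_{2k}:=2\sum_{n\ge1}\Bigl(\sum_{m\mid n}(-1)^m m^{2k-1}\Bigr)q^n.
\]
Since $\mathbf C_r=C^{-1}(\zeta\partial_\zeta)^rC$ is the complete Bell polynomial in the cumulants $(\zeta\partial_\zeta)^sL$, we get
\[
\mathbf C_2(-1;q)=\kappa_2,\qquad \mathbf C_4(-1;q)=\kappa_4+3\kappa_2^2 .
\]
The series $\kappa_2$ and $\kappa_4$ are explicit linear combinations of $E_2(\tau),E_2(2\tau),E_4(\tau),E_4(2\tau)$ (together with the constants needed to kill the constant terms). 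Concretely, $\sum_{m\mid n}(-1)^m m^{t}=2^{t+1}\sigma_t(n/2)-\sigma_t(n)$.

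\emph{Step 2: the exact identity.} We write $\kappa_2$ and $\kappa_4$ in terms of $E_2$, $E_{2,2}$, $E_4$. Then $\kappa_4+3\kappa_2^2$, $D\kappa_2$ (computed via \eqref{eq:Ramanujan} and the analogous formula for $DE_{2,2}$) and $\kappa_2$ all lie in the finite-dimensional space of mixed-weight quasi-modular forms of weight $\le4$ on $\Gamma_0(2)$. We then solve
\[
\mathbf C_4(-1;q)=2D\mathbf C_2(-1;q)-\mathbf C_2(-1;q)+5F,
\]
where $F$ is a combination of $E_2^2$, $E_2E_{2,2}$, $E_{2,2}^2$, $E_4$, $E_2$, $E_{2,2}$, $1$ with $5$-integral coefficients. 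Equivalently, we show that the difference $\mathbf C_4-(2D-1)\mathbf C_2$ has coefficients divisible by $5$. Any residual pure-$E_4$ or $E_4(2\tau)$ contribution is harmless, since $E_4\equiv1\pmod5$ (its coefficients are divisible by $240$). Such an identity can be verified by comparing finitely many $q$-coefficients, because the space is finite-dimensional. This yields $\mathbf C_4(-1;q)\equiv(2D-1)\mathbf C_2(-1;q)\pmod5$.

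This step is the main obstacle. It requires pinning down the weight-four relation for $\kappa_2^2$ on $\Gamma_0(2)$ and checking that no denominator divisible by $5$ appears. I would first try to take the needed relation from the structural computations of Section~\ref{sec:structure} (the explicit expressions of $\mathbf C_2(-1;q)$ and $\mathbf C_4(-1;q)$ in $E_2,E_{2,2},E_4$). Failing that, I would compute it directly from the first few coefficients.

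\emph{Step 3: all orders $4j$.} For every integer $m$ and $j\ge1$ we have $m^{4j}\equiv m^4\pmod5$: both sides are $\equiv0$ if $5\mid m$, and $\equiv1$ otherwise. Hence $C_{4j}(-1;q)\equiv C_4(-1;q)\pmod5$ coefficientwise. Dividing by $C(-1;q)$ preserves the congruence, since $1/C(-1;q)=(-q;q)_\infty^2/(q;q)_\infty\in\mathbb Z[[q]]$. So $c_{4j}(n)\equiv c_4(n)\pmod 5$. By Step~2, $c_4(n)\equiv(2n-1)c_2(n)\pmod5$. For $n=5n'+3$ we get $2n-1\equiv 5\equiv0\pmod 5$, hence $c_{4j}(5n+3)\equiv0\pmod5$.
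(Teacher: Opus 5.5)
Your proposal is correct and is essentially the paper's proof. The computation you leave open in Step~2 gives an exact identity, not merely a congruence: $\kappa_2=(3-E_2-2E_{2,2})/12$, $\kappa_4=(4E_{2,2}^2-E_4-3)/24$ and the formula for $D(E_2+2E_{2,2})$ combine to $\kappa_4=(2D-1)\kappa_2+2\kappa_2^2$. Hence $\mathbf C_4(-1;q)=(2D-1)\mathbf C_2(-1;q)+5\mathbf C_2(-1;q)^2$, so $F=\mathbf C_2(-1;q)^2$, and neither the finite coefficient check nor the $E_4\equiv1\pmod 5$ fallback is needed. Your Step~3 is precisely the paper's Lemma~\ref{lem:momentcongruence} with $p=5$, $a=1$.
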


We next turn to prime detection.
Craig, van Ittersum, and Ono \cite{CvIO} showed that the primes can be characterized as the solutions of equations involving MacMahon's partition functions.

For $a\ge1$, MacMahon's partition function $M_a(n)$ is defined by
\[
 \sum_{n\ge0}M_a(n)q^n
 :=\sum_{0<s_1<\cdots<s_a}
 \prod_{j=1}^a\frac{q^{s_j}}{(1-q^{s_j})^2}.
\]
Equivalently,
\[
 M_a(n)
 =\sum_{\substack{
     0<s_1<\cdots<s_a\\
     m_1,\ldots,m_a\ge1\\
     m_1s_1+\cdots+m_as_a=n}}
 m_1\cdots m_a.
\]
Thus $M_a(n)$ counts partitions of $n$ into exactly $a$ distinct part sizes, weighted by the product of their
multiplicities. Craig et al.\ \cite{CvIO} proved, in particular, that
\[
 (n^2-3n+2)M_1(n)-8M_2(n)\ge0
\]
for every positive integer $n$, with equality for $n\ge2$ if and only if $n$ is prime.
They also constructed infinitely many prime-detecting equations involving generalized MacMahon partition functions.
The author, together with Matsusaka and Shin \cite{KMS}, obtained further prime-detecting expressions for variants of MacMahon's partition functions.
These prime-detecting expressions arise naturally by expressing the corresponding quasi-modular generating functions in terms of Eisenstein series and their derivatives and applying Leli\`evre's divisor-sum criteria.

Here we construct a prime-detecting expression using normalized twisted crank moments.
We call a series
\[
 F(q)=\sum_{n\ge0}f(n)q^n
\]
prime-detecting if, for every $n\ge2$,
\[
 f(n)=0\quad\Longleftrightarrow\quad n\text{ is prime}.
\]
We impose no sign condition on the coefficients at composite indices.
The following theorem presents such an expression in $\mathscr C^{(2)}$.

\begin{theorem}\label{thm:prime-detection}
Define
\begin{equation}\label{eq:prime-detecting-form}
 \begin{aligned}
 \mathcal H(q):={}&
 (-3D^3+11D^2-16D+8)\mathbf{C}_2(-1;q)\\
 &+(-2D^2+14D-18)
       \bigl(\mathbf{C}_2(-1;q)^2\bigr)
 +16\mathbf{C}_2(-1;q)^3\\
 ={}&\sum_{n\ge1}h(n)q^n.
 \end{aligned}
\end{equation}
Then $\mathcal H\in \mathscr C^{(2)}$, and for every $n\ge2$,
\[
 h(n)
 \begin{cases}
  =0,&\text{if $n$ is prime},\\
  >0,&\text{if $n$ is composite and $4\nmid n$},\\
  <0,&\text{if $4\mid n$}.
 \end{cases}
\]
Moreover, $\mathcal H$ has highest weight eight, and no element of $\mathscr C^{(2)}$ of highest weight less than eight is prime-detecting.
\end{theorem}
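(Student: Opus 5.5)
The plan is to reduce everything to divisor sums, starting from an explicit formula for $\mathbf C_2(-1;q)$. Logarithmic differentiation of \eqref{eq:intro-crank-gf} gives $\mathbf C_2=(\zeta\partial_\zeta)^2\log C+\bigl((\zeta\partial_\zeta)\log C\bigr)^2$. The first logarithmic derivative vanishes at $\zeta=-1$ by the symmetry $\zeta\leftrightarrow\zeta^{-1}$. Hence
\[
\mathbf C_2(-1;q)=-2\sum_{n\ge1}\frac{q^n}{(1+q^n)^2}=-2\sum_{N\ge1}\bigl(\sigma(N)-4\sigma(N/2)\bigr)q^N=\frac{3-E_2-2E_{2,2}}{12},
\]
where $\sigma(x)=0$ for $x\notin\mathbb Z$. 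This already shows $\mathbf C_2(-1;q)\in\mathbb C[E_2+2E_{2,2}]$, consistent with Theorem~\ref{thm:structure}. Since $\mathscr C^{(2)}$ is closed under $D$ (Theorem~\ref{thm:structure}), $\mathcal H\in\mathscr C^{(2)}$ is immediate. The highest weight is also clear: $D^3\mathbf C_2$, $D^2(\mathbf C_2^2)$ and $\mathbf C_2^3$ have weights $8$, $8$, $6$, and the weight-eight parts do not cancel. The last point can be checked on the leading monomials in $E_2$.

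Next I would compute $h(n)$ in closed form. Write $G=E_2+2E_{2,2}$. Using Ramanujan's identities \eqref{eq:Ramanujan} together with the analogous ones for $E_2(2\tau)$ and $E_4(2\tau)$, I would expand $G^2$, $G^3$, $D(G^2)$ and $D^2(G^2)$ in the basis
\[
\bigl\{\,D^jE_2(\tau),\ D^jE_2(2\tau),\ D^jE_{2k}(\tau),\ D^jE_{2k}(2\tau),\ \text{cusp forms}\,\bigr\}.
\]
This is the standard decomposition of quasi-modular forms on $\Gamma_0(2)$ into derivatives of Eisenstein series plus cusp forms. It is legitimate because every element of $\widetilde{\mathcal M}(\Gamma_0(2))$ of weight $k$ is a sum of derivatives of modular forms. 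In weight $\le 6$ on $\Gamma_0(2)$ there are no cusp forms. In weight $8$ there is a one-dimensional cusp space, spanned by $\eta(\tau)^8\eta(2\tau)^8$.

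The specific coefficients in \eqref{eq:prime-detecting-form} are presumably chosen so that the cusp component cancels and only divisor sums remain. I would verify this by comparing the first few $q$-coefficients. The result should be
\[
h(n)=\sum_{j}P_j(n)\,\sigma_{2j-1}(n)+\sum_jQ_j(n)\,\sigma_{2j-1}(n/2)
\]
with explicit polynomials $P_j,Q_j$. Note that terms involving $\sigma(N/4)$ can only appear through the product $E_{2,2}$-terms if we pass to level $4$. If such terms arise, I would keep them, since the sign change at $4\mid n$ suggests a contribution supported on multiples of $4$.

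For the prime behaviour I would argue as follows. For an odd prime $p$, $\sigma_k(p)=1+p^k$ and $\sigma_k(p/2)=0$, so $h(p)$ is a polynomial in $p$ that must vanish identically; $p=2$ is checked by hand. For composite $n$ I would follow Leli\`evre's approach: rewrite $h(n)$ as a sum over divisor pairs $d\cdot e=n$ of a polynomial weight $w(d,e)$ which vanishes when $\{d,e\}=\{1,n\}$ and is positive for $1<d,e<n$ (for odd $n$, or $n\equiv2\pmod4$ after accounting for the $\sigma(n/2)$ terms). I expect this sign analysis to be the main obstacle. In particular, the negativity for $4\mid n$ requires isolating the dominant term coming from the divisor $n/2$ or $n/4$ and bounding the remaining terms, for instance via $\sigma_k(n)\le\zeta(k)n^k$-type estimates. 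Small $n$ would be checked numerically.

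For minimality, let $F\in\mathscr C^{(2)}$ have highest weight $\le6$. There are no cusp forms in these weights, so its coefficients are $f(n)=\sum_{a,b}\alpha_{a,b}n^a\sigma_b(n)+\beta_{a,b}n^a\sigma_b(n/2)$, with $a+b+1\le6$. Requiring $f(p)=0$ for all large primes $p$ gives a polynomial identity in $p$, hence linear constraints on the $\alpha_{a,b}$. Combined with the membership constraints from Theorem~\ref{thm:structure}, namely the form $\mathbb C[G]+(4E_{2,2}^2-E_4)\widetilde{\mathcal M}(\Gamma_0(2))$, this yields a finite linear-algebra problem. I would show that every solution also satisfies $f(n)=0$ for some fixed composite $n$, for example $n=4$ or $n=9$, or else has $f(2)\ne0$. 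Either way such an $F$ cannot be prime-detecting. This last check is a finite computation on the explicit low-dimensional spaces.
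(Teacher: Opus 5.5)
Your reduction is sound and follows the paper in outline. You get $\mathbf C_2(-1;q)=-2\sum_{n\ge1}q^n/(1+q^n)^2=2F_2$, membership by $D$-closure, and minimality by finite linear algebra on the weight-$\le6$ part of $\mathscr C^{(2)}$. In the paper's basis $1,F_2,DF_2,D^2F_2,F_4,DF_4,F_6$, vanishing at odd primes leaves $c+\lambda(F_4-(D^2-D+1)F_2)$, and $[q^2]$ then gives $4\lambda$. Your direct $E_2^4$-check of the top weight is also fine; the paper instead deduces exactness of weight eight from minimality. You need not worry about a cusp component or $\sigma(n/4)$-terms. Combining $2F_4=(2D-1)\mathbf C_2(-1;q)+2\mathbf C_2(-1;q)^2$ with the analogous identity for $2F_6$ gives exactly $\mathcal H=2F_6-2(D^2-D+3)F_4+2(-D^3+4D^2-3D+2)F_2$. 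So $h(n)$ involves only $\sigma_b(n)$ and $\sigma_b(n/2)$ for $b=1,3,5$.

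The genuine gap is the step you defer as ``the main obstacle'': the sign of $h(n)$ at composite $n$, and even its non-vanishing there. That is the actual content of the prime-detection claim. The Leli\`evre-type mechanism you propose works for odd $n$, but it does not extend in the form you suggest to even $n$.

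Concretely, write $\mathcal H=(D-2)U+Z$ with $U=2F_4-2(D^2-D+1)F_2$ and $Z=2F_6-2(D^2+1)F_4+2D^2F_2$. Then $h(n)=(n-2)u(n)+z(n)$. For odd $n$ you get $u(n)=2\sum_{d\mid n}d(d^2-1)\bigl((n/d)^2-1\bigr)$ and $z(n)=2\sum_{d\mid n}d(d^2-1)(n^2-d^2)$. Both are termwise nonnegative, with positive terms exactly for $1<d<n$; this is your picture.

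For $n=2m$ with $m$ odd, however, grouping $h(2m)$ by the odd divisors $d\mid m$ gives a $d=1$ contribution $-8(2m+3)(m^2-1)<0$. So positivity at, say, $n=2p$ comes only from the $d=p$ term dominating. For $4\mid n$ the required sign is negative anyway. What is needed are genuine estimates, which the paper supplies. With $n=tm$ and $t=2^a$, one has $[q^n]F_{2k}=A_{2k-1}(t)\sigma_{2k-1}(m)$, where $A_1=2t-3$, $A_3=(8t^3-15)/7$, $A_5=(32t^5-63)/31$.

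For $t\ge4$, the bounds $\sigma_5(m)\le m^2\sigma_3(m)$ and $\sigma_1(m)\le\sigma_3(m)\le m^2\sigma_1(m)$ give $z(n)<0$ for all such $n$ and $u(n)<0$ for $n>4$; $h(4)=-120$ is computed directly. For $t=2$, the bounds $\sigma_3(m)<\tfrac{16}{15}m^3$ and $\sigma_1(m)\le m+(m+3)^2/36$ yield $h(2m)>\tfrac{m^3}{45}(82m^2-840m-492)>0$ for $m\ge11$. That leaves $m=3,5,7,9$ to direct computation. Without an argument of this kind, your proposal does not show that $\mathcal H$ is prime-detecting.
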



\section{The algebra of twisted crank moments}
\label{sec:structure}

We recall the complete Bell polynomials and their inverse relation, following the approach in \cite{KKL}.
The complete Bell polynomials $B_n(X_1,\ldots,X_n)\in\mathbb Z[X_1,\ldots,X_n]$ are
defined by
\[
 \exp\left(\sum_{j\ge1}X_j\frac{t^j}{j!}\right)
 =\sum_{n\ge0}B_n(X_1,\ldots,X_n)\frac{t^n}{n!},
\]
where $B_0=1$.
Let \[
 Y_n=B_n(X_1,\ldots,X_n).
\]
Taking logarithms, we find
\[
 X_n
 =n![t^n]\log\left(
  1+\sum_{j\ge1}Y_j\frac{t^j}{j!}\right),
\]
where $[t^n]$ extracts the coefficient of $t^n$.
The polynomials expressing $Y_n$ in terms of $X_1,\ldots,X_n$ and $X_n$ in terms of
$Y_1,\ldots,Y_n$ have integer coefficients, as follows from their combinatorial expressions
in terms of set partitions; see \cite{KKL} for details.
Thus, for every $N\ge1$,
\[
 \mathbb C[Y_1,\ldots,Y_N]
 =\mathbb C[X_1,\ldots,X_N].
\]

We recall Rhoades's formula for the twisted crank moments \cite[Theorem~3.1]{Rhoades} in a form suited to Bell polynomial inversion.

Since
\[
 \left.\frac{\partial^r}{\partial t^r}
 C(\zeta e^t;q)\right|_{t=0}
 =C_r(\zeta;q),
\]
Taylor expansion gives
\[
 \frac{C(\zeta e^t;q)}{C(\zeta;q)}
 =\sum_{r\ge0}\mathbf C_r(\zeta;q)\frac{t^r}{r!}.
\]
Using the product formula \eqref{eq:intro-crank-gf} and the expansion of $\log(1-x)$, we obtain
\begin{equation}
 \log\frac{C(\zeta e^t;q)}{C(\zeta;q)}
 =\sum_{d,m\ge1}
 \frac{\zeta^d(e^{dt}-1)+\zeta^{-d}(e^{-dt}-1)}{d}
 q^{dm}.
\label{logexp}
\end{equation}
At $\zeta=-1$, this becomes
\[
 \begin{aligned}
 \log\frac{C(-e^t;q)}{C(-1;q)}
 &=\sum_{d,m\ge1}
   \frac{(-1)^d}{d}
   \bigl(e^{dt}+e^{-dt}-2\bigr)q^{dm}\\
 &=2\sum_{k\ge1}
   \left(\sum_{d,m\ge1}(-1)^d d^{2k-1}q^{dm}\right)
   \frac{t^{2k}}{(2k)!}.
 \end{aligned}
\]

Following Rhoades, write
\[
 \Phi_j(\tau):=\sum_{n\ge1}\sigma_j(n)q^n
\]
and define
\begin{equation}\label{eq:Fdef}
 F_{2k}(\tau)
 :=2^{2k}\Phi_{2k-1}(2\tau)-\Phi_{2k-1}(\tau),
 \qquad k\ge1.
\end{equation}
Separating the even and odd values of $d$, we obtain
\[
 \sum_{d,m\ge1}(-1)^d d^{2k-1}q^{dm}
 =2^{2k}\Phi_{2k-1}(2\tau)-\Phi_{2k-1}(\tau)
 =F_{2k}(\tau).
\]
Hence
\begin{equation}\label{eq:twistedlog}
 \log\frac{C(-e^t;q)}{C(-1;q)}
 =2\sum_{k\ge1}F_{2k}(\tau)\frac{t^{2k}}{(2k)!}.
\end{equation}
Equivalently,
\[
 \sum_{k\ge0}\mathbf C_{2k}(-1;q)\frac{t^{2k}}{(2k)!}
 =\exp\left(
  2\sum_{k\ge1}F_{2k}(\tau)\frac{t^{2k}}{(2k)!}
 \right).
\]
Comparison with the defining generating function of the complete Bell polynomials yields
\begin{equation}
 \mathbf C_{2k}(-1;q)
 =B_{2k}(0,2F_2,0,2F_4,\ldots,0,2F_{2k}).
\label{eq:belltwistedcrank}
\end{equation}
By the equality of algebras established above, we obtain
\begin{equation}\label{eq:momentcumulantalgebra}
 \mathscr C^{(2)}
 =\mathbb C[\mathbf C_{2k}(-1;q):k\ge1]
 =\mathbb C[F_{2k}:k\ge1].
\end{equation}

In particular, 
\begin{equation}\label{eq:inversebell}
 \begin{aligned}
 2F_2={}&\mathbf C_2(-1;q),\\
 2F_4={}&\mathbf C_4(-1;q)-3\mathbf C_2(-1;q)^2,\\
 2F_6={}&\mathbf C_6(-1;q)
       -15\mathbf C_2(-1;q)\mathbf C_4(-1;q)
       +30\mathbf C_2(-1;q)^3.
 \end{aligned}
\end{equation}

Inserting the Fourier expansion of $E_{2k}$ into
\eqref{eq:Fdef}, we find
\[
 F_{2k}(\tau)
 =\frac{B_{2k}}{4k}
 \bigl(2^{2k}-1-2^{2k}E_{2k}(2\tau)+E_{2k}(\tau)\bigr).
\]
For $k\ge2$, define
\begin{equation}\label{eq:Gdef}
 G_{2k}
 :=F_{2k}-\frac{B_{2k}}{4k}(2^{2k}-1).
\end{equation}
Then
\[
 G_{2k}
 =-\frac{B_{2k}}{4k}
 \bigl(2^{2k}E_{2k}(2\tau)-E_{2k}(\tau)\bigr)
 \in\mathcal M_{2k}(\Gamma_0(2)).
\]

\begin{lemma}\label{lem:modularcrank}
We have
\[
 \mathbb C[G_{2k}:k\ge2]
 =\mathbb C+
 (4E_{2,2}^2-E_4)\mathbb C[E_{2,2},E_4].
\]
\end{lemma}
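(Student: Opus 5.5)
The plan is to identify both sides with the space of modular forms on $\Gamma_0(2)$ that vanish at the cusp $0$, apart from constants. Write $h:=4E_{2,2}^2-E_4\in\mathcal M_4(\Gamma_0(2))$ and let $W$ denote the Fricke involution $\tau\mapsto-1/(2\tau)$, acting in weight $2k$ by $f|_{2k}W(\tau)=(\sqrt2\,\tau)^{-2k}f(-1/(2\tau))$. From the modularity of $E_{2k}$ on $\mathrm{SL}_2(\mathbb Z)$ for $k\ge2$, one checks directly that
\[
E_{2k}(\tau)|W=2^{k}E_{2k}(2\tau),\qquad E_{2k}(2\tau)|W=2^{-k}E_{2k}(\tau).
\]
Hence $G_{2k}|W=-\tfrac{B_{2k}}{4k}\,2^k\bigl(E_{2k}(\tau)-E_{2k}(2\tau)\bigr)$, which has zero constant term. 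Thus every $G_{2k}$ vanishes at the cusp $0$.

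The same computation, together with $E_{2,2}|W=-E_{2,2}$, gives $h|W=4\bigl(E_{2,2}^2-E_4(2\tau)\bigr)$. This has zero constant term, and its $q$-coefficient is $4\cdot 48=192\neq0$. By the valence formula for $\Gamma_0(2)$ (index $3$, so a weight-$4$ form has total order $4\cdot3/12=1$), $h$ has a simple zero at the cusp $0$ and no other zeros. Therefore $f\mapsto f/h$ is an isomorphism from $\{f\in\mathcal M_{2k}(\Gamma_0(2)): f \text{ vanishes at }0\}$ onto $\mathcal M_{2k-4}(\Gamma_0(2))$. It follows that the right-hand side $\mathbb C+h\,\mathbb C[E_{2,2},E_4]$ is exactly the constants plus the direct sum over $k\ge2$ of these vanishing subspaces $V_{2k}$. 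In particular, this space is an algebra.

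The inclusion $\subseteq$ is then immediate: each $G_{2k}$ is homogeneous of weight $2k$ and lies in $V_{2k}$, and products of forms vanishing at $0$ again vanish at $0$.

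For $\supseteq$, I argue by induction on the weight that $V_{2k}$ is spanned by monomials in the $G$'s. Every product of at least two $G$'s lies in $h^2\mathcal M$. Since $\mathcal M_{2k-4}/h\mathcal M_{2k-8}$ is one-dimensional, spanned by $E_{2,2}^{k-2}$, we have $\dim V_{2k}/(V_{2k}\cap h^2\mathcal M)=1$. Moreover, by induction on $k$, the space $V_{2k}\cap h^2\mathcal M = h\cdot(V_{2k-4})$ is spanned by products $G_4\cdot(\text{monomials in the }G\text{'s})$ together with other products of at least two $G$'s. (I will check this carefully, using $G_4=c\,h$ with $c\neq0$, which holds because $V_4$ is one-dimensional.) It therefore suffices to show that $G_{2k}\notin h^2\mathcal M$, i.e.\ that $G_{2k}/h$ does not vanish at the cusp $0$. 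This is the only genuinely computational step and the main point to verify. The $q$-coefficient of $G_{2k}|W$ is $-\tfrac{B_{2k}}{4k}2^k\cdot\bigl(-\tfrac{4k}{B_{2k}}\bigr)=2^k\neq0$. Comparing with the nonzero $q$-coefficient $192$ of $h|W$ shows that $(G_{2k}/h)|W$ has nonzero constant term. So $G_{2k}$ supplies the missing direction $E_{2,2}^{k-2}h$ modulo $h^2\mathcal M$, which completes the induction and the proof of the lemma.
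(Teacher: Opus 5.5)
Your proof is correct and takes essentially the paper's route: the Fricke involution shows each $G_{2k}$ is divisible by $h=4E_{2,2}^2-E_4$; comparing the $q$-coefficients $2^k$ and $192$ after applying $W$ shows $G_{2k}\notin h^2\mathcal M$, which is exactly the paper's $a_{k,1}\neq0$; and an induction on weight finishes. The only differences are that you get ``vanishing at the cusp $0$ if and only if divisible by $h$'' from the valence formula, whereas the paper derives it algebraically by dividing by $E_4-E_{2,2}^2$ in $\mathbb C[E_{2,2},E_4]$ and transporting with $\mathcal W_2$, and that your codimension-one decomposition $V_{2k}=\mathbb C\,G_{2k}\oplus hV_{2k-4}$ replaces the paper's binomial re-expansion of $hE_{2,2}^rE_4^s$.
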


\begin{proof}
For a modular form $f$ of weight $2k$ on $\Gamma_0(2)$, define the normalized Fricke action by
\[
 \mathcal W_2(f)(\tau)
 :=2^{-k}\tau^{-2k}
 f\left(-\frac1{2\tau}\right),
\]
and extend it linearly to
$\mathcal M(\Gamma_0(2))$.
The Fricke action preserves
$\mathcal M(\Gamma_0(2))$ and satisfies
\[
 \mathcal W_2(fg)=\mathcal W_2(f)\mathcal W_2(g),
 \qquad
 \mathcal W_2^2(f)=f.
\]
Thus it is an algebra automorphism of $\mathcal M(\Gamma_0(2))$ whose inverse is itself.
On the generators, it acts by
\[
 \mathcal W_2(E_{2,2})=-E_{2,2},
 \qquad
 \mathcal W_2(E_4)=5E_{2,2}^2-E_4.
\]

For $k\ge 2$, the transformation law of the Eisenstein series implies
\begin{align}
 \mathcal W_2(G_{2k})
 &=-\frac{2^kB_{2k}}{4k}
   \bigl(E_{2k}(\tau)-E_{2k}(2\tau)\bigr)\notag\\
 &=2^k
   \bigl(\Phi_{2k-1}(\tau)-\Phi_{2k-1}(2\tau)\bigr)\notag\\
 &=2^kq+O(q^2).
 \label{eq:FrickeG}
\end{align}
Thus $\mathcal W_2(G_{2k})$ has zero constant Fourier coefficient.
Every homogeneous polynomial $P$ of weight $2k$ in
$E_{2,2}$ and $E_4$ can be written uniquely as
\[
 P=(E_4-E_{2,2}^2)Q+cE_{2,2}^k,
\]
where $Q\in\mathbb C[E_{2,2},E_4]$ and $c\in\mathbb C$, by polynomial division with respect to $E_4$.
Since $E_4-E_{2,2}^2$ has zero constant Fourier coefficient and $E_{2,2}$ has constant Fourier
coefficient $1$, the constant Fourier coefficient of $P$ is $c$.
Thus $P$ has zero constant Fourier coefficient if and only if it is divisible by $E_4-E_{2,2}^2$.

It follows that $\mathcal W_2(G_{2k})$ is divisible by $E_4-E_{2,2}^2$.
Since
\begin{equation}
 \mathcal W_2(E_4-E_{2,2}^2)=4E_{2,2}^2-E_4,
\label{eq:w2e4e22}
\end{equation}
applying $\mathcal W_2$ again shows that $G_{2k}$ is divisible by $4E_{2,2}^2-E_4$.
Since
\[
 \mathbb C[E_{2,2},E_4]
 =\mathbb C[E_{2,2},4E_{2,2}^2-E_4],
\]
we may therefore write
\[
 G_{2k}
 =\sum_{j=1}^{\lfloor k/2\rfloor}
 a_{k,j}E_{2,2}^{k-2j}(4E_{2,2}^2-E_4)^j.
\]
Applying $\mathcal W_2$ and using $\mathcal W_2(E_{2,2})=-E_{2,2}$, \eqref{eq:w2e4e22}, and
\[
 E_{2,2}=1+O(q),
 \qquad
 E_4-E_{2,2}^2=192q+O(q^2),
\]
we obtain
\[
 \mathcal W_2(G_{2k})
 =192(-1)^{k-2}a_{k,1}q+O(q^2).
\]
Comparison with \eqref{eq:FrickeG} yields
\[
 a_{k,1}
 =\frac{(-1)^{k-2}2^k}{192}\ne0.
\]

We now prove by induction on $k\ge2$ that
\[
 (4E_{2,2}^2-E_4)E_{2,2}^{k-2}
 \in\mathbb C[G_{2j}:j\ge2].
\]
For $k=2$, this follows because $G_4$ is a nonzero multiple of $4E_{2,2}^2-E_4$.
For $2\le j\le\lfloor k/2\rfloor$, write
\[
 E_{2,2}^{k-2j}(4E_{2,2}^2-E_4)^j
 =(4E_{2,2}^2-E_4)^{j-1}
  \bigl((4E_{2,2}^2-E_4)E_{2,2}^{k-2j}\bigr).
\]
The first factor belongs to the generated algebra by the case $k=2$, and the second by the induction
hypothesis.
Thus every term with $j\ge2$ in the expansion of $G_{2k}$ belongs to this algebra.
Since $a_{k,1}\ne0$, the term $(4E_{2,2}^2-E_4)E_{2,2}^{k-2}$ also belongs to it,
completing the induction.

Taking products of these elements, we obtain every monomial
\[
 E_{2,2}^{a}(4E_{2,2}^2-E_4)^b,
 \qquad a\ge0,\quad b\ge1.
\]
For any $r,s\ge0$, we have
\[
 \begin{aligned}
 &(4E_{2,2}^2-E_4)E_{2,2}^rE_4^s\\
 &\quad
 =(4E_{2,2}^2-E_4)E_{2,2}^r
 \bigl(4E_{2,2}^2-(4E_{2,2}^2-E_4)\bigr)^s\\
 &\quad
 =\sum_{j=0}^s(-1)^j\binom sj4^{s-j}
 (4E_{2,2}^2-E_4)^{j+1}E_{2,2}^{r+2s-2j}.
 \end{aligned}
\]
Each summand belongs to $\mathbb C[G_{2k}:k\ge2]$ by the preceding induction.
Hence
\[
 \mathbb C+
 (4E_{2,2}^2-E_4)\mathbb C[E_{2,2},E_4]
 \subseteq\mathbb C[G_{2k}:k\ge2].
\]
The reverse inclusion follows from the divisibility of every $G_{2k}$ by $4E_{2,2}^2-E_4$.
This completes the proof.

\end{proof}

To prove Theorem~\ref{thm:structure}, we first record the modular identities
\begin{equation}\label{eq:E4E6dilations}
 \begin{aligned}
 E_4(2\tau)
 &=\frac{5E_{2,2}^2-E_4}{4},\\
 E_6(\tau)
 &=4E_{2,2}^3-3E_{2,2}E_4,\\
 E_6(2\tau)
 &=\frac{11E_{2,2}^3-3E_{2,2}E_4}{8}.
 \end{aligned}
\end{equation}
These follow from $\mathcal M(\Gamma_0(2))=\mathbb C[E_{2,2},E_4]$ by comparing the constant and $q$ coefficients in weights four and six.

Setting $\zeta=1$ in \eqref{logexp} and comparing the coefficients of $t^2$, we obtain
\begin{equation}\label{eq:c1}
 \mathbf C_2(1;q)=2\Phi_1(\tau)=\frac{1-E_2}{12}.
\end{equation}
Moreover, \eqref{eq:Fdef} and \eqref{eq:E4E6dilations}, together with the Fourier expansions of the Eisenstein series, give
\begin{equation}\label{eq:lowL}
 \begin{aligned}
 2F_2=\mathbf C_2(-1;q)
 &=\frac{3-E_2-2E_{2,2}}{12},\\
 F_4
 &=\frac{4E_{2,2}^2-E_4-3}{48},\\
 F_6
 &=\frac{3-E_{2,2}(4E_{2,2}^2-E_4)}{24}.
 \end{aligned}
\end{equation}

\begin{proof}[Proof of Theorem~\ref{thm:structure}]
By \eqref{eq:momentcumulantalgebra},
\[
 \mathscr C^{(2)}=\mathbb C[F_{2k}:k\ge1].
\]
Since
\[
 F_2=\frac{3-E_2-2E_{2,2}}{24}
\]
and $F_{2k}-G_{2k}$ is constant for every $k\ge2$, we have
\[
 \mathscr C^{(2)}
 =\mathbb C\bigl[
 E_2+2E_{2,2},\,G_{2k}:k\ge2
 \bigr].
\]

Applying Lemma~\ref{lem:modularcrank} and using
\[
 \widetilde{\mathcal M}(\Gamma_0(2))
 =\mathbb C[E_{2,2},E_4,E_2+2E_{2,2}],
\]
we obtain
\[
 \mathscr C^{(2)}
 =\mathbb C[E_2+2E_{2,2}]
 +(4E_{2,2}^2-E_4)
   \widetilde{\mathcal M}(\Gamma_0(2)).
\]

This description also shows that $\mathscr C^{(2)}$ is graded by weight, since the polynomial generator and the ideal generator are homogeneous.

To prove properness, consider the quotient map
\[
 \pi:\widetilde{\mathcal M}(\Gamma_0(2))
 \longrightarrow
 \frac{\widetilde{\mathcal M}(\Gamma_0(2))}
 {(4E_{2,2}^2-E_4)\widetilde{\mathcal M}(\Gamma_0(2))}.
\]
Since $E_2,E_{2,2},E_4$ are algebraically independent,
this quotient is naturally isomorphic to
$\mathbb C[E_2,E_{2,2}]$.
Under this identification, the image of $\mathscr C^{(2)}$ is
\[
 \pi(\mathscr C^{(2)})
 =\mathbb C[E_2+2E_{2,2}]
 \subsetneq\mathbb C[E_2,E_{2,2}].
\]
Indeed, $E_{2,2}$ cannot be expressed as a polynomial in $E_2+2E_{2,2}$, by the algebraic independence of $E_2$ and $E_{2,2}$.
Therefore
\[
 \mathscr C^{(2)}
 \subsetneq\widetilde{\mathcal M}(\Gamma_0(2)).
\]

Next, \eqref{eq:Ramanujan} and \eqref{eq:E4E6dilations} give
\begin{equation}\label{eq:level2derivatives}
 \begin{aligned}
 DE_{2,2}
 &=\frac{E_2E_{2,2}+E_4-2E_{2,2}^2}{6},\\
 DE_4
 &=\frac{E_2E_4+3E_{2,2}E_4-4E_{2,2}^3}{3},\\
 DE_2
 &=\frac{E_2^2-E_4}{12}.
 \end{aligned}
\end{equation}
Consequently,
\begin{equation}\label{eq:twistedderivatives}
 \begin{aligned}
 D(E_2+2E_{2,2})
 &=\frac{(E_2+2E_{2,2})^2
          -3(4E_{2,2}^2-E_4)}{12},\\
 D(4E_{2,2}^2-E_4)
 &=\frac{E_2-E_{2,2}}{3}(4E_{2,2}^2-E_4).
 \end{aligned}
\end{equation}
Since
$D\bigl(\widetilde{\mathcal M}(\Gamma_0(2))\bigr)
 \subseteq\widetilde{\mathcal M}(\Gamma_0(2))$,
the ideal
\[
 (4E_{2,2}^2-E_4)
 \widetilde{\mathcal M}(\Gamma_0(2))
\]
is stable under $D$.
Moreover, the derivative of a polynomial in $E_2+2E_{2,2}$ belongs to $\mathscr C^{(2)}$.
This proves
\[
 D(\mathscr C^{(2)})\subseteq\mathscr C^{(2)}.
\]

Finally, \eqref{eq:c1}, \eqref{eq:lowL} and \eqref{eq:inversebell} yield
\begin{equation}\label{eq:fullinverse}
 \begin{aligned}
 E_2
 &=1-12\mathbf C_2(1;q),\\
 E_{2,2}
 &=1+6\mathbf C_2(1;q)-6\mathbf C_2(-1;q),\\
 E_4
 &=4\bigl(1+6\mathbf C_2(1;q)
            -6\mathbf C_2(-1;q)\bigr)^2\\
 &\qquad
   -3-24\mathbf C_4(-1;q)
   +72\mathbf C_2(-1;q)^2.
 \end{aligned}
\end{equation}
Therefore
\[
 \widetilde{\mathcal M}(\Gamma_0(2))
 \subseteq
 \mathbb C[
 \mathbf C_2(1;q),
 \mathbf C_2(-1;q),
 \mathbf C_4(-1;q)].
\]
The reverse inclusion follows from quasi-modularity, completing the proof.
\end{proof}


\section{Congruences for normalized twisted moments}
\label{sec:congruences}

As noted in Section~\ref{sec:moments}, the normalized twisted moments $\mathbf C_{2k}(-1;q)$ have integer Fourier coefficients.
We begin with an elementary congruence between normalized twisted moments of different orders.
\begin{lemma}\label{lem:momentcongruence}
Let $p$ be an odd prime and let $a\ge1$.
Suppose that $r,s$ are positive even integers satisfying
\[
 r,s\ge a,
 \qquad
 r\equiv s\pmod{p^{a-1}(p-1)}.
\]
Then
\[
 \mathbf C_r(-1;q)
 \equiv\mathbf C_s(-1;q)\pmod{p^a}.
\]
In particular, for every $k\ge1$,
\[
 \mathbf C_{2k+p-1}(-1;q)
 \equiv\mathbf C_{2k}(-1;q)\pmod p.
\]
\end{lemma}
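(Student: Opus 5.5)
The congruence will come from the explicit combinatorial description of the coefficients, not from quasi-modularity. By definition,
\[
 \mathbf C_r(-1;q)=\frac{C_r(-1;q)}{C(-1;q)},
 \qquad
 C_r(-1;q)=\sum_{n\ge0}\Bigl(\sum_{m\in\mathbb Z}(-1)^m m^r M(m,n)\Bigr)q^n .
\]
The reciprocal $1/C(-1;q)=(-q;q)_\infty^2/(q;q)_\infty$ is a fixed power series with integer coefficients. It does not depend on $r$. It therefore suffices to prove the coefficientwise congruence
\[
 C_r(-1;q)\equiv C_s(-1;q)\pmod{p^a}.
\]
Multiplying both sides by this common integral series then gives the statement for the normalized moments.

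For the unnormalized series, compare the two sides term by term in $m$. I will show that
\[
 m^r\equiv m^s\pmod{p^a}\qquad\text{for every } m\in\mathbb Z.
\]
There are two cases.
\begin{itemize}
 \item If $p\nmid m$, then $m$ is a unit modulo $p^a$. The group $(\mathbb Z/p^a\mathbb Z)^\times$ has order $p^{a-1}(p-1)$. Euler's theorem and the hypothesis $r\equiv s\pmod{p^{a-1}(p-1)}$ then give $m^r\equiv m^s$.
 \item If $p\mid m$, then $p^r\mid m^r$ and $p^s\mid m^s$. Since $r,s\ge a$, both sides are $\equiv0\pmod{p^a}$.
\end{itemize}
This second case is exactly where the hypothesis $r,s\ge a$ is used. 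The case $m=0$ is harmless, because $r,s>0$.

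Since the coefficients $M(m,n)$ are integers, the congruence $m^r\equiv m^s$ carries over to the sums. This holds even for the conventional values at $n=0,1$, including $M(0,1)=-1$. For each fixed $n$ the sum over $m$ is finite, so no convergence issue arises. Hence $C_r(-1;q)\equiv C_s(-1;q)\pmod{p^a}$ coefficientwise, and multiplying by $1/C(-1;q)\in\mathbb Z[[q]]$ finishes the main claim.

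For the particular case, take $a=1$, $r=2k+p-1$ and $s=2k$. Both are even, because $p$ is odd. Both are $\ge1$, and $r\equiv s\pmod{p-1}$.

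The argument is elementary throughout. The only points needing care are the non-unit residues $m$ divisible by $p$, handled by $r,s\ge a$, and the observation that the normalization factor is independent of $r$ and integral. Neither should pose a real obstacle.
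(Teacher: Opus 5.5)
Your proposal is correct and follows the paper's proof essentially verbatim: Euler's theorem handles $p\nmid m$, the hypothesis $r,s\ge a$ handles $p\mid m$, you sum against $(-1)^mM(m,n)$, and then you multiply by the $r$-independent integral series $1/C(-1;q)$. Your explicit verification of the special case ($a=1$, $r=2k+p-1$, $s=2k$, both even because $p$ is odd) fills in a step that the paper leaves implicit.
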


\begin{proof}
If $p\nmid m$, Euler's theorem implies
\[
 m^r\equiv m^s\pmod{p^a}.
\]
If $p\mid m$, both powers vanish modulo $p^a$ because $r,s\ge a$.
Multiplying by $(-1)^mM(m,n)$ and summing over $m$,
we obtain
\[
 C_r(-1;q)\equiv C_s(-1;q)\pmod{p^a}.
\]
Since $C(-1;q)^{-1}\in\mathbb Z[[q]]$, division by the common normalization factor preserves this congruence.
\end{proof}

We now derive the differential identities needed to prove Proposition~\ref{prop:crank-congruences}.
Using \eqref{eq:lowL} and the first identity in \eqref{eq:twistedderivatives}, we obtain
\[
 D\mathbf C_2(-1;q)
 =-\frac{(E_2+2E_{2,2})^2
          -3(4E_{2,2}^2-E_4)}{144}.
\]
Hence
\begin{equation}\label{eq:L4secondmoment}
 (2D-1)\mathbf C_2(-1;q)
       +2\mathbf C_2(-1;q)^2 =\frac{3(4E_{2,2}^2-E_4)-9}{72} =2F_4.
\end{equation}
Combining this with the second identity in \eqref{eq:inversebell}  yields
\begin{equation}\label{eq:C4diff}
 \mathbf C_4(-1;q)
 =(2D-1)\mathbf C_2(-1;q)
   +5\mathbf C_2(-1;q)^2.
\end{equation}

\begin{proof}[Proof of Proposition~\ref{prop:crank-congruences}]
Reducing \eqref{eq:C4diff} modulo $5$ gives
\[
 \mathbf C_4(-1;q)
 \equiv(2D-1)\mathbf C_2(-1;q)\pmod5.
\]
In terms of the coefficients defined by
\[
 \mathbf C_{2k}(-1;q)
 =\sum_{n\ge0}c_{2k}(n)q^n,
\]
this becomes
\[
 c_4(n)\equiv(2n-1)c_2(n)\pmod5.
\]
Taking $n=5m+3$, we obtain
\[
 c_4(5m+3)\equiv0\pmod5
 \qquad(m\ge0).
\]
Moreover, by Lemma~\ref{lem:momentcongruence},
\[
 \mathbf C_{4j}(-1;q)
 \equiv\mathbf C_4(-1;q)\pmod5
 \qquad(j\ge1).
\]
Hence
\[
 c_{4j}(5n+3)\equiv0\pmod5
 \qquad(j\ge1,\ n\ge0),
\]
as required.
\end{proof}

A further identity follows from \eqref{eq:lowL} and \eqref{eq:twistedderivatives}:
\[
 2F_6
 =4(D-1)F_4+\mathbf C_2(-1;q)
   +16\mathbf C_2(-1;q)F_4.
\]
Substituting \eqref{eq:L4secondmoment} and using the product rule yields
\begin{equation}\label{eq:L6secondmoment}
 \begin{aligned}
 2F_6={}&
 (4D^2-6D+3)\mathbf C_2(-1;q)\\
 &+(12D-12)\bigl(\mathbf C_2(-1;q)^2\bigr)
   +16\mathbf C_2(-1;q)^3.
 \end{aligned}
\end{equation}
The preceding identities and Lemma~\ref{lem:momentcongruence} yield the following congruences.

\begin{corollary}\label{cor:explicitcongruences}
The following congruences hold in $\mathbb Z[[q]]$:
\begin{equation}\label{eq:mod3}
 (D-1)\mathbf C_2(-1;q)
 +\mathbf C_2(-1;q)^2
 \equiv0\pmod3,
\end{equation}
and
\begin{equation}\label{eq:mod5}
 \begin{aligned}
 &(4D^2-6D+2)\mathbf C_2(-1;q)\\
 &\quad +(12D-12)\bigl(\mathbf C_2(-1;q)^2\bigr)
       +16\mathbf C_2(-1;q)^3
 \equiv0\pmod5.
 \end{aligned}
\end{equation}
In particular, for every $n\ge1$,
\[
 \sum_{j=1}^{n-1}c_2(j)c_2(n-j)
 \equiv(1-n)c_2(n)\pmod3.
\]
\end{corollary}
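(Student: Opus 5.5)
The plan is to read both congruences off the exact identities \eqref{eq:L4secondmoment}, \eqref{eq:L6secondmoment} and \eqref{eq:inversebell}, and to combine them with the periodicity modulo $p$ in Lemma~\ref{lem:momentcongruence}. Throughout, write $C_2$ for $\mathbf C_2(-1;q)$.

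\emph{The congruence modulo 3.}
Lemma~\ref{lem:momentcongruence} with $p=3$, $a=1$, $k=1$ gives
\[
\mathbf C_4(-1;q)\equiv C_2 \pmod 3.
\]
Identity \eqref{eq:C4diff} says $\mathbf C_4(-1;q)=(2D-1)C_2+5C_2^2$. Subtracting the two, we get
\[
(2D-2)C_2+5C_2^2\equiv 0 \pmod 3 .
\]
Modulo $3$ this is $2\bigl((D-1)C_2+C_2^2\bigr)\equiv0$, since $5\equiv2$. Because $2$ is invertible modulo $3$, this gives \eqref{eq:mod3}. All series here lie in $\mathbb Z[[q]]$, so the congruences are coefficientwise.

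\emph{The congruence modulo 5.}
Lemma~\ref{lem:momentcongruence} with $p=5$, $a=1$, $k=1$ gives
\[
\mathbf C_6(-1;q)\equiv C_2 \pmod 5.
\]
The third line of \eqref{eq:inversebell} gives
\[
2F_6=\mathbf C_6-15C_2\mathbf C_4+30C_2^3\equiv \mathbf C_6 \pmod 5,
\]
so $2F_6\equiv C_2 \pmod 5$. This step requires $2F_6\in\mathbb Z[[q]]$, which holds because \eqref{eq:inversebell} expresses it through the integral moments. Substituting \eqref{eq:L6secondmoment} and moving $C_2$ to the left turns $4D^2-6D+3$ into $4D^2-6D+2$, which is exactly \eqref{eq:mod5}.

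\emph{The coefficient statement.}
I extract the coefficient of $q^n$ in \eqref{eq:mod3}. Since $c_2(0)=0$, the $q^n$ coefficient of $C_2^2$ is $\sum_{j=1}^{n-1}c_2(j)c_2(n-j)$. The $q^n$ coefficient of $(D-1)C_2$ is $(n-1)c_2(n)$. Hence
\[
\sum_{j=1}^{n-1}c_2(j)c_2(n-j)\equiv (1-n)c_2(n)\pmod 3 .
\]

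\emph{Main obstacle.}
The only delicate point is the integrality needed to reduce identities that were derived over $\mathbb Q$. These are \eqref{eq:L6secondmoment}, whose left side $2F_6$ is defined through Eisenstein series, and \eqref{eq:C4diff}. I handle it by noting that every term in both identities is an integer combination of $D$-derivatives and products of the integral series $C_2$, or equals an integral moment via \eqref{eq:inversebell}. Once that is checked, the remaining steps are routine reductions.
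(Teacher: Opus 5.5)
Your proof is correct and follows the paper's argument: both congruences come from Lemma~\ref{lem:momentcongruence}, which gives $\mathbf C_4(-1;q)\equiv\mathbf C_2(-1;q)\pmod 3$ and $\mathbf C_6(-1;q)\equiv\mathbf C_2(-1;q)\pmod 5$, combined with \eqref{eq:C4diff} and \eqref{eq:L6secondmoment}. The only cosmetic difference is that you get $2F_6\equiv\mathbf C_6(-1;q)\pmod 5$ from the inverse relation in \eqref{eq:inversebell}, whereas the paper uses the forward relation $\mathbf C_6(-1;q)=2F_6+60F_2F_4+120F_2^3$ together with $F_{2k}\in\mathbb Z[[q]]$; that integrality is immediate from the divisor-sum definition \eqref{eq:Fdef}, so $2F_6$ is not really defined through Eisenstein series and its integrality needs no extra argument.
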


\begin{proof}
By Lemma~\ref{lem:momentcongruence}, we have
\[
 \mathbf C_4(-1;q)
 \equiv\mathbf C_2(-1;q)\pmod3.
\]
Substituting \eqref{eq:C4diff} and cancelling the invertible factor $2$ modulo $3$ proves \eqref{eq:mod3}.
The convolution congruence follows by comparing coefficients.

The same lemma also implies
\[
 \mathbf C_6(-1;q)
 \equiv\mathbf C_2(-1;q)\pmod5.
\]
Since
\[
 B_6(0,x_2,0,x_4,0,x_6)
 =15x_2^3+15x_2x_4+x_6,
\]
equation \eqref{eq:belltwistedcrank} takes the form
\[
 \mathbf C_6(-1;q)
 =2F_6+60F_2F_4+120F_2^3.
\]
Since $F_{2k}\in\mathbb Z[[q]]$ by its defining
divisor-sum expansion, it follows that
\[
 2F_6\equiv\mathbf C_2(-1;q)\pmod5.
\]
Substitution into \eqref{eq:L6secondmoment} proves
\eqref{eq:mod5}.
\end{proof}

\begin{remark}
The differential identities used here, and hence the congruences above, can also be derived from the recurrence of \cite[Theorem~6.1]{AAT}, together with Lemma~\ref{lem:momentcongruence}.
We explain this connection in Section~\ref{sec:comparison}.
\end{remark}


\section{Prime detection by twisted crank moments}
\label{sec:primes}

In this section, we prove Theorem~\ref{thm:prime-detection}.
We first express $\mathcal H$ in terms of the Eisenstein-type series $F_{2k}$ and their derivatives
and determine the signs of its Fourier coefficients.
We then prove that its highest weight is minimal among prime-detecting elements of $\mathscr C^{(2)}$.


\subsection{A divisor-sum expression and the case of odd \(n\)}

Define
\begin{align}
 U&:=2F_4-2(D^2-D+1)F_2,\label{eq:U}\\
 Z&:=2F_6-2(D^2+1)F_4+2D^2F_2.\label{eq:Z}
\end{align}
Using \eqref{eq:L4secondmoment}, \eqref{eq:L6secondmoment}, and $2F_2=\mathbf C_2(-1;q)$, we rewrite \eqref{eq:prime-detecting-form} as
\begin{equation}\label{eq:Hcumulants}
 \mathcal H
 =2F_6-2(D^2-D+3)F_4
   +2(-D^3+4D^2-3D+2)F_2.
\end{equation}
Equivalently,
\begin{equation}\label{eq:Hdef}
 \mathcal H=(D-2)U+Z.
\end{equation}
Since $F_{2k}\in\mathscr C^{(2)}$ and $\mathscr C^{(2)}$ is closed under $D$, we have
$\mathcal H\in\mathscr C^{(2)}$.
Moreover, \eqref{eq:Hcumulants} shows that its highest weight is at most eight.

\begin{remark}
The choice of $U$ is motivated by a divisor-sum identity of Leli\`evre type. As we show below, for odd $n\ge3$, the coefficient $[q^n]U$ vanishes precisely when $n$ is prime and is positive otherwise.
Applying $D-2$ preserves this property at odd indices and makes the coefficient at $q^2$ vanish.
However, $(D-2)U$ also has zero coefficient at $q^6$.
The term $Z$ corrects this failure: it preserves vanishing at every prime, while the estimates below
show that $(D-2)U+Z$ has nonzero coefficients at every composite index.
\end{remark}

Write
\[
 U=\sum_{n\ge1}u(n)q^n,
 \qquad
 Z=\sum_{n\ge1}z(n)q^n.
\]
Then
\begin{equation}\label{eq:hcoeff}
 h(n)=(n-2)u(n)+z(n).
\end{equation}
By \eqref{eq:Fdef}, the Fourier coefficients of $F_{2k}$ are
\begin{equation}\label{eq:Fcoeff}
 [q^n]F_{2k}
 =2^{2k}\sigma_{2k-1}(n/2)-\sigma_{2k-1}(n),
 \qquad n\ge1,
\end{equation}
where $\sigma_j(n/2)=0$ when $n$ is odd.

For odd $n$, it follows from \eqref{eq:U}, \eqref{eq:Z}, and \eqref{eq:Fcoeff} that
\begin{align}
 u(n)
 &=2\bigl((n^2-n+1)\sigma_1(n)-\sigma_3(n)\bigr),
 \label{eq:oddU}\\
 z(n)
 &=2\sum_{d\mid n}d(d^2-1)(n^2-d^2).
 \label{eq:oddZ}
\end{align}
We use the divisor-sum identity
\begin{equation}\label{eq:basepositive}
 (n^2-n+1)\sigma_1(n)-\sigma_3(n)
 =\sum_{d\mid n}
 d(d^2-1)\bigl((n/d)^2-1\bigr).
\end{equation}
Indeed, expanding the right-hand side and using
$\sum_{d\mid n}(n/d)=\sigma_1(n)$ gives the left-hand side.

Every summand in \eqref{eq:basepositive} is nonnegative.
It is positive precisely when $1<d<n$.
Thus $u(n)=0$ at odd primes and $u(n)>0$ at odd composite integers.
The same conclusion holds for $z(n)$ by \eqref{eq:oddZ}.
Since $n-2>0$ for odd $n\ge3$, equation \eqref{eq:hcoeff} proves
\[
 h(n)
 \begin{cases}
  =0,&\text{if $n$ is an odd prime},\\
  >0,&\text{if $n$ is odd and composite}.
 \end{cases}
\]


\subsection{The case \(4\mid n\)}

For an even integer $n$, write
\[
 n=tm,\qquad t=2^a,\quad a\ge1,\quad m\text{ odd}.
\]
By \eqref{eq:Fcoeff} and the multiplicativity of divisor sums,
\[
 \begin{aligned}
 [q^n]F_2&=(2t-3)\sigma_1(m),\\
 [q^n]F_4&=\frac{8t^3-15}{7}\sigma_3(m),\\
 [q^n]F_6&=\frac{32t^5-63}{31}\sigma_5(m).
 \end{aligned}
\]
If we put
\[
 A_1(t):=2t-3,\qquad
 A_3(t):=\frac{8t^3-15}{7},\qquad
 A_5(t):=\frac{32t^5-63}{31},
\]
then by \eqref{eq:U} and \eqref{eq:Z}, we have
\begin{align}
 \frac{u(n)}2
 &=A_3(t)\sigma_3(m)
   -(t^2m^2-tm+1)A_1(t)\sigma_1(m),
 \label{eq:evenU}\\
 \frac{z(n)}2
 &=A_5(t)\sigma_5(m)
   -(t^2m^2+1)A_3(t)\sigma_3(m)
   +t^2m^2A_1(t)\sigma_1(m).
 \label{eq:evenZ}
\end{align}

Suppose that $4\mid n$, so that $t\ge4$.
Since
\[
 \sigma_5(m)\le m^2\sigma_3(m),
 \qquad
 \sigma_1(m)\le\sigma_3(m),
\]
equation \eqref{eq:evenZ} implies
\[
 \frac{z(n)}2
 \le\bigl(m^2K(t)-A_3(t)\bigr)\sigma_3(m),
\]
where
\[
 \begin{aligned}
 K(t)
 &:=A_5(t)-t^2A_3(t)+t^2A_1(t)\\
 &=-\frac{24}{217}t^5+2t^3
   -\frac67t^2-\frac{63}{31}.
 \end{aligned}
\]
We have $K(4)=-1$.
For $t\ge8$, writing the first two terms as $t^3(2-24t^2/217)$ shows that $K(t)<0$.
Since $t$ is a power of two, it follows that 
\begin{equation}\label{eq:znegative}
 z(n)<0\qquad(4\mid n).
\end{equation}

To estimate $u(n)$, use $\sigma_3(m)\le m^2\sigma_1(m)$.
For $t\ge8$, we have
\[
 \frac{t^2m^2-tm+1}{m^2}
 \ge t^2-t+1
 >\frac{A_3(t)}{A_1(t)}.
\]
The first inequality follows from
\[
 t^2-\frac tm+\frac1{m^2}-(t^2-t+1)
 =\left(1-\frac1m\right)
   \left(t-1-\frac1m\right)\ge0,
\]
and the second from
\[
 \begin{aligned}
 &7(2t-3)(t^2-t+1)-(8t^3-15)\\
 &\qquad=(t-1)(6t^2-29t+6)>0.
 \end{aligned}
\]
Applying \eqref{eq:evenU}, we obtain
\[
 \frac{u(n)}2
 \le
 \bigl(m^2A_3(t)
 -(t^2m^2-tm+1)A_1(t)\bigr)\sigma_1(m)
 <0.
\]

If $t=4$ and $m\ge3$, then
\[
 16-\frac4m+\frac1{m^2}
 \ge\frac{133}{9}
 >\frac{71}{5}
 =\frac{A_3(4)}{A_1(4)},
\]
so the same argument shows that $u(n)<0$.
Together with \eqref{eq:hcoeff} and \eqref{eq:znegative}, this proves $h(n)<0$
for every multiple of four greater than four.
The only remaining case is $t=4$ and $m=1$, namely $n=4$.
Evaluating \eqref{eq:evenU} and \eqref{eq:evenZ} at $n=4$, we find
\[
 u(4)=12,\qquad z(4)=-144,
\]
and hence $h(4)=-120$. 
Thus $h(n)<0$ whenever $4\mid n$.


\subsection{The case \(n\equiv2\pmod4\)}

We now consider $n=2m$, where $m$ is odd.

At the prime $2$, we compute from \eqref{eq:Fcoeff}, \eqref{eq:U}, and \eqref{eq:Z} that
\[
 u(2)=8,\qquad z(2)=0.
\]
Thus $h(2)=0$ by \eqref{eq:hcoeff}.
It remains to prove that $h(2m)>0$ for odd $m\ge3$.
Since
\[
 A_1(2)=1,\qquad A_3(2)=7,\qquad A_5(2)=31,
\]
equations \eqref{eq:evenU} and \eqref{eq:evenZ} simplify to
\begin{align}
 u(2m)
 &=14\sigma_3(m)
   -2(4m^2-2m+1)\sigma_1(m),
 \label{eq:2mU}\\
 \frac{z(2m)}2
 &=31\sigma_5(m)
   -7(4m^2+1)\sigma_3(m)
   +4m^2\sigma_1(m).
 \label{eq:2mZ}
\end{align}
We will show that
\[
 h(2m)=(2m-2)u(2m)+z(2m)>0
\]
by bounding the two terms on the right separately.

We first obtain a lower bound for $z(2m)$.
We use
\[
 \sigma_5(m)\ge m^5,\qquad \sigma_1(m)\ge m,
\]
and, since $m$ is odd,
\[
 \begin{aligned}
 \frac{\sigma_3(m)}{m^3}
 &=\sum_{d\mid m}d^{-3}
 \le\sum_{\substack{d\ge1\\d\text{ odd}}}d^{-3}\\
 &\le1+\frac1{27}
      +\int_1^\infty\frac{dx}{(2x+1)^3}
 =\frac{115}{108}<\frac{16}{15}.
 \end{aligned}
\]
Consequently, \eqref{eq:2mZ} yields
\[
 \begin{aligned}
 z(2m)
 &>2\left(
 31m^5-\frac{112}{15}(4m^2+1)m^3+4m^3
 \right)\\
 &=\frac{2m^3}{15}(17m^2-52).
 \end{aligned}
\]

We next bound the possible negative contribution of $(2m-2)u(2m)$.
Discarding the positive term involving $\sigma_3(m)$ in \eqref{eq:2mU}, we obtain
\[
 (2m-2)u(2m)
 >-2(2m-2)(4m^2-2m+1)\sigma_1(m).
\]
Since
\[
 2(2m-2)(4m^2-2m+1)
 =16m^3-24m^2+12m-4<16m^3,
\]
it follows that
\[
 (2m-2)u(2m)>-16m^3\sigma_1(m).
\]
An upper bound for $\sigma_1(m)$ therefore gives the required lower bound for this contribution.
Every proper divisor of the odd integer $m$ is odd and at most $m/3$. There are
\[
 r:=\left\lfloor\frac{m/3+1}{2}\right\rfloor
\]
positive odd integers at most $m/3$, and their sum is $r^2$.
Thus
\[
 \sigma_1(m)
 \le m+r^2
 \le m+\frac{(m+3)^2}{36},
\]
and hence
\[
 (2m-2)u(2m)
 >-16m^3\left(m+\frac{(m+3)^2}{36}\right).
\]

Combining the two lower bounds, we conclude that
\begin{equation}\label{eq:finalbound}
 \begin{aligned}
 h(2m)
 &>\frac{2m^3}{15}(17m^2-52)
   -16m^3\left(m+\frac{(m+3)^2}{36}\right)\\
 &=\frac{m^3}{45}(82m^2-840m-492).
 \end{aligned}
\end{equation}
For $m\ge11$, the quadratic factor is positive, since
\[
 82m^2-840m-492
 =190+(m-11)(82m+62)>0.
\]
Therefore $h(2m)>0$ for every odd $m\ge11$.

The remaining values are $m=3,5,7,9$.
From \eqref{eq:2mU}, \eqref{eq:2mZ}, and $h(2m)=(2m-2)u(2m)+z(2m)$, we compute
\[
 h(6)=1488,\qquad
 h(10)=22224,\qquad
 h(14)=119136,\qquad
 h(18)=282096.
\]
Thus $h(2m)>0$ for every odd $m\ge3$.
This completes the proof of the coefficient assertions in Theorem~\ref{thm:prime-detection}.


\subsection{Minimal highest weight}

We now prove that no element of $\mathscr C^{(2)}$ of highest weight less than eight is prime-detecting.
Since all weights occurring in $\mathscr C^{(2)}$ are even, it suffices to consider forms of highest weight
at most six.

By Theorem~\ref{thm:structure}, this subspace has basis
\[
 \begin{gathered}
 1,\quad E_2+2E_{2,2},\quad
 (E_2+2E_{2,2})^2,\quad 4E_{2,2}^2-E_4,\\
 (E_2+2E_{2,2})^3,\quad
 (E_2+2E_{2,2})(4E_{2,2}^2-E_4),\quad
 E_{2,2}(4E_{2,2}^2-E_4).
 \end{gathered}
\]
To examine Fourier coefficients at primes, it is more convenient to use the equivalent basis
\begin{equation}\label{eq:lowbasis}
 1,\quad F_2,\quad DF_2,\quad D^2F_2,\quad
 F_4,\quad DF_4,\quad F_6.
\end{equation}
Indeed, using \eqref{eq:lowL} and \eqref{eq:twistedderivatives}, we compute
\[
 \begin{aligned}
 DF_2
 &=-\frac{(E_2+2E_{2,2})^2
          -3(4E_{2,2}^2-E_4)}{288},\\
 DF_4
 &=\frac{(E_2-E_{2,2})(4E_{2,2}^2-E_4)}{144},\\
 D^2F_2
 &=-\frac{(E_2+2E_{2,2})^3}{1728}
   +\frac{E_2(4E_{2,2}^2-E_4)}{192}.
 \end{aligned}
\]
Together with the formulas for $F_2,F_4,$ and $F_6$ in \eqref{eq:lowL}, these identities recover all
elements of the preceding basis.

Let $f$ belong to this subspace and suppose that its coefficient at every odd prime vanishes.
Write
\[
 f=c+a_1F_2+a_2DF_2+a_3D^2F_2
      +a_4F_4+a_5DF_4+a_6F_6.
\]
For an odd prime $p$, we have by \eqref{eq:Fcoeff}
\[
 [q^p]F_{2k}=-(1+p^{2k-1}).
\]
Since $D$ multiplies the coefficient of $q^p$ by $p$, the condition $[q^p]f=0$ becomes
\[
 \begin{aligned}
 0={}&a_1(1+p)+a_2(p+p^2)+a_3(p^2+p^3)\\
    &+a_4(1+p^3)+a_5(p+p^4)+a_6(1+p^5).
 \end{aligned}
\]
This polynomial vanishes at infinitely many values of $p$, so all its coefficients are zero.
The terms of degrees five and four give $a_6=a_5=0$, and the remaining coefficients satisfy
\[
 (a_1,a_2,a_3,a_4)=\lambda(-1,1,-1,1)
\]
for some $\lambda\in\mathbb C$.
Thus vanishing at all odd primes forces
\[
 f=c+\lambda\bigl(F_4-(D^2-D+1)F_2\bigr).
\]

It remains to impose vanishing at the prime $2$.
By \eqref{eq:Fcoeff},
\[
 [q^2]F_2=1,\qquad [q^2]F_4=7,
\]
and hence
\[
 [q^2]f
 =\lambda\bigl(7-(2^2-2+1)\bigr)
 =4\lambda.
\]
Therefore $[q^2]f=0$ forces $\lambda=0$.
The only forms in this subspace whose coefficients vanish at every prime are consequently constants.
A constant is not prime-detecting, since all of its positive-index coefficients vanish.

We have proved that a prime-detecting element of $\mathscr C^{(2)}$ must have highest weight at least eight.
The form $\mathcal H$ is prime-detecting and has highest weight at most eight, so its highest weight is exactly eight.
This completes the proof of Theorem~\ref{thm:prime-detection}.

\begin{remark}
The form $\mathcal H$ lies in the span of constants,
Eisenstein series at $\tau$ and $2\tau$, and their derivatives. Indeed, \eqref{eq:Hcumulants} expresses
$\mathcal H$ as a linear combination of derivatives of $F_2,F_4,$ and $F_6$, each of which is a constant
plus a linear combination of Eisenstein series.
Thus the Eisenstein nature of our prime-detecting form follows directly from its construction.
This is consistent with the structural results on prime-detecting quasi-modular forms in
\cite{KaneKL,KwonLee}.
\end{remark}


\section{A MacMahon variant and twisted crank moments}
\label{sec:comparison}

For $k\ge1$, we consider MacMahon's function
\[
 B_k(q)
 :=\sum_{0<m_1<\cdots<m_k}
 \prod_{i=1}^k\frac{q^{m_i}}{(1+q^{m_i})^2},
 \qquad B_0(q):=1.
\]
In \cite[Corollary~3.9]{KMS}, we showed that
\[
 B_k(q)\in\widetilde{\mathcal M}(\Gamma_0(2)).
\]
We now prove that these functions generate the same algebra as the normalized twisted crank moments.

\begin{proposition}\label{prop:Bcrank}
We have
\[
 \mathbb C[B_k:k\ge1]=\mathscr C^{(2)}.
\]
\end{proposition}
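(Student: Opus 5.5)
The plan is to imitate the proof of \eqref{eq:momentcumulantalgebra}: write the generating function of the $B_k$ as an exponential of a power series whose coefficients are the $F_{2k}$, and then apply Bell polynomial inversion. Set
\[
 \mathcal B(x;q):=\sum_{k\ge0}B_k(q)x^k=\prod_{m\ge1}\left(1+\frac{xq^m}{(1+q^m)^2}\right).
\]
Choose $t$ with $x=(2\sinh(t/2))^2=e^t+e^{-t}-2$, as is done for the ordinary $A_k$ in \cite{CvIO}. Then
\[
 (1+q^m)^2+xq^m=(1+e^tq^m)(1+e^{-t}q^m),
\]
so
\[
 \mathcal B(x;q)=\prod_{m\ge1}\frac{(1+e^tq^m)(1+e^{-t}q^m)}{(1+q^m)^2}=\frac{C(-1;q)}{C(-e^t;q)}\cdot(\text{check}).
\]
More precisely, \eqref{eq:intro-crank-gf} gives $C(-e^t;q)/C(-1;q)=\prod_m(1+q^m)^2/\bigl((1+e^tq^m)(1+e^{-t}q^m)\bigr)$. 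Hence $\mathcal B=\bigl(C(-e^t;q)/C(-1;q)\bigr)^{-1}$, and by \eqref{eq:twistedlog},
\[
 \log\mathcal B(x;q)=-2\sum_{k\ge1}F_{2k}\frac{t^{2k}}{(2k)!}.
\]

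The next step is to compare coefficients. We have $x=t^2+t^4/12+\cdots$, an even power series in $t$ with leading term $t^2$ and rational coefficients. Setting $s=t^2$, it is invertible as a power series in $s$. Expanding $\mathcal B$ in powers of $s$, the coefficient of $s^k$ is $B_k$ plus a $\mathbb Q$-linear combination of $B_1,\dots,B_{k-1}$, with unitriangular transition. Therefore
\[
 \mathbb C[B_1,\dots,B_N]=\mathbb C[\mathbf B'_1,\dots,\mathbf B'_N],
\]
where $\sum_k\mathbf B'_k s^k=\mathcal B$ expressed in $s=t^2$. Since $\mathbf B'_k$ is $(2k)!^{-1}$ times the $t^{2k}$ coefficient of $\exp(-2\sum F_{2k}t^{2k}/(2k)!)$, the Bell polynomial relation with all odd arguments zero gives
\[
 \mathbf B'_k=\frac{1}{(2k)!}B_{2k}(0,-2F_2,0,-2F_4,\dots,0,-2F_{2k}).
\]
The inverse logarithm formula then shows that $F_{2k}$ is a polynomial in $\mathbf B'_1,\dots,\mathbf B'_k$, and conversely. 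Combined with \eqref{eq:momentcumulantalgebra}, this yields, for every $N$,
\[
 \mathbb C[B_1,\dots,B_N]=\mathbb C[F_2,\dots,F_{2N}].
\]
Letting $N\to\infty$ proves the proposition. Integrality is not needed, since we work over $\mathbb C$.

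The main obstacle is keeping the change of variables $x\leftrightarrow t$ clean, since it is a substitution rather than a coefficient identity. The right framing is to treat everything in $\mathbb C[[q]][[s]]$. The triangularity holds because $x=s\,u(s)$ with $u(0)=1$, so $x^k=s^k+O(s^{k+1})$. A sanity check is the lowest case: $B_1=-2F_2=-\mathbf C_2(-1;q)$, which one verifies directly, because $q^m/(1+q^m)^2=\sum_{n\ge1}(-1)^{n-1}nq^{mn}$ summed over $m$ is $-F_2$, consistent up to the factor. Any discrepancy in constants there would signal a sign or normalization slip, but it would not affect the equality of algebras.
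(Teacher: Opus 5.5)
Your argument is correct and is essentially the paper's proof: both rest on the factorization $(1+q^m)^2+xq^m=(1+e^tq^m)(1+e^{-t}q^m)$ with $x=2\cosh t-2=t^2+O(t^4)$, i.e.\ on the identity $C(-e^t;q)/C(-1;q)=\bigl(\sum_k B_k x^k\bigr)^{-1}$, followed by a triangularity argument; the only difference is that you take logarithms and invert Bell polynomials to reach the $F_{2k}$ (after a unitriangular linear change from the $B_k$ to the $t^{2k}$-coefficients), whereas the paper expands the reciprocal and compares directly with $\mathbf C_{2k}(-1;q)/(2k)!$. One small slip is in your sanity check: the $t^2$-coefficient of $\exp\bigl(-2\sum_k F_{2k}t^{2k}/(2k)!\bigr)$ is $-F_2$, so $B_1=-F_2=-\tfrac12\mathbf C_2(-1;q)$, not $-2F_2$. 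This agrees with your own divisor-sum computation and with $\mathbf C_2(-1;q)=-2B_1$ in the paper, and, as you note, it does not affect the equality of algebras.
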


\begin{proof}
By the product formula \eqref{eq:intro-crank-gf},
\[
 \frac{C(-e^t;q)}{C(-1;q)}
 =\prod_{m\ge1}
 \frac{(1+q^m)^2}
 {(1+e^tq^m)(1+e^{-t}q^m)}.
\]
Since
\[
 (1+e^tq^m)(1+e^{-t}q^m)
 =(1+q^m)^2+(2\cosh t-2)q^m,
\]
we obtain
\[
 \frac{C(-e^t;q)}{C(-1;q)}
 =\prod_{m\ge1}
 \left(
 1+(2\cosh t-2)\frac{q^m}{(1+q^m)^2}
 \right)^{-1}.
\]
On the other hand, by the definition of $B_k(q)$,
\[
 \prod_{m\ge1}
 \left(1+x\frac{q^m}{(1+q^m)^2}\right)
 =\sum_{k\ge0}B_k(q)x^k,
\]
because the coefficient of $x^k$ is obtained by choosing the nonconstant term from exactly $k$ distinct factors.
Substituting $x=2\cosh t-2$ yields \begin{equation}\label{eq:Bbridge}
 \frac{C(-e^t;q)}{C(-1;q)}
 =\left(
 \sum_{k\ge0}B_k(q)(2\cosh t-2)^k
 \right)^{-1}.
\end{equation}

To compare coefficients, first expand
\[
 \left(1+\sum_{j\ge1}B_j(q)x^j\right)^{-1}
 =1-\sum_{j\ge1}B_j(q)x^j
   +\left(\sum_{j\ge1}B_j(q)x^j\right)^2-\cdots.
\]
The coefficient of $x^k$ is $-B_k$ plus a polynomial in $B_1,\ldots,B_{k-1}$, because in every term involving two or more factors, the positive indices sum to $k$, so each is less than $k$.

Now substitute
\[
 x=2\cosh t-2=t^2+O(t^4).
\]
Then $x^k=t^{2k}+O(t^{2k+2})$, while powers $x^j$
with $j>k$ do not contribute to the coefficient of $t^{2k}$.
The contributions from $j<k$ involve only
$B_1,\ldots,B_{k-1}$.
Comparing with
\[
 \frac{C(-e^t;q)}{C(-1;q)}
 =\sum_{k\ge0}\mathbf C_{2k}(-1;q)
   \frac{t^{2k}}{(2k)!},
\]
we obtain
\[
 \frac{\mathbf C_{2k}(-1;q)}{(2k)!}
 =-B_k+P_k(B_1,\ldots,B_{k-1})
 \qquad(k\ge1)
\]
for some polynomial $P_k$ with rational coefficients.
Thus each normalized twisted moment is a polynomial in the functions $B_j$.
Conversely, starting with $\mathbf C_2(-1;q)=-2B_1$, we can solve these identities successively for $B_k$.
Each $B_k$ is a polynomial in $\mathbf C_2(-1;q),\ldots,\mathbf C_{2k}(-1;q)$,
which proves the equality of the two algebras.
\end{proof}

The functions $B_k(q)$ are denoted by $U_k(2,q)$ in Amdeberhan, Andrews, and Tauraso
\cite[Theorem~6.1]{AAT}, who proved the recurrence
\begin{equation}\label{eq:Brecurrence}
 k(2k-1)B_k
 =\left(D+B_1-\binom{k}{2}\right)B_{k-1},
 \qquad k\ge1.
\end{equation}
Equivalently,
\[
 DB_k
 =(k+1)(2k+1)B_{k+1}
 +\left(\binom{k+1}{2}-B_1\right)B_k.
\]
Thus $\mathbb C[B_k:k\ge1]$ is closed under $D$.
Proposition~\ref{prop:Bcrank} then provides another proof that $\mathscr C^{(2)}$ is closed under $D$.

The same correspondence explains the connection with the differential identities used in
Section~\ref{sec:congruences}.
For example, comparing coefficients in \eqref{eq:Bbridge},
we find
\[
 \mathbf C_2(-1;q)=-2B_1,
 \qquad
 \mathbf C_4(-1;q)=-2B_1+24B_1^2-24B_2.
\]
Substituting the case $k=2$ of \eqref{eq:Brecurrence},
namely
\[
 6B_2=DB_1+B_1^2-B_1,
\]
we recover
the differential identity \eqref{eq:C4diff}.
The identity for the sixth moment follows similarly by using the case $k=3$.
Thus the recurrence provides an alternative derivation of the differential identities underlying the
congruences proved above.

Furthermore, using the identity $\mathbf C_2(-1;q)=-2B_1(q)$,
we obtain an explicit prime-detecting expression involving only $B_1(q)$ and its products and derivatives,
addressing the problem left open at the end of Section~4 of \cite{KMS}.

\begin{corollary}\label{cor:B-prime-detection}
Define
\[
 \begin{aligned}
 \mathcal B(q):={}&
 (-3D^3+11D^2-16D+8)B_1(q)\\
 &+(4D^2-28D+36)\bigl(B_1(q)^2\bigr)
   +64B_1(q)^3\\
 =:{}&\sum_{n\ge1}b(n)q^n.
 \end{aligned}
\]
For every $n\ge2$, we have
\[
 b(n)
 \begin{cases}
 =0,&\text{if $n$ is prime},\\
 <0,&\text{if $n$ is composite and $4\nmid n$},\\
 >0,&\text{if $4\mid n$}.
 \end{cases}
\]
Moreover, $\mathcal B$ has highest weight eight, which is minimal among prime-detecting elements
of $\mathbb C[B_k:k\ge1]$.
\end{corollary}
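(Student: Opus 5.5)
The plan is to reduce everything to Theorem~\ref{thm:prime-detection} via the identity $\mathbf C_2(-1;q)=-2B_1(q)$ from the proof of Proposition~\ref{prop:Bcrank}. Substituting into the definition \eqref{eq:prime-detecting-form} of $\mathcal H$ gives
\[
 \mathcal H=-2(-3D^3+11D^2-16D+8)B_1+4(-2D^2+14D-18)\bigl(B_1^2\bigr)-128B_1^3,
\]
by linearity of $D$ and $D^2$, $D^3$ applied to the scalar multiples $-2B_1$ and $4B_1^2$. Comparing with the definition of $\mathcal B$, this is exactly $-2\mathcal B$: the linear term matches with factor $-2$, $4(-2D^2+14D-18)=-2(4D^2-28D+36)$, and $-128=-2\cdot 64$. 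I would verify these three coefficient matches explicitly; this bookkeeping is the only place an error could occur. The conclusion is $\mathcal B=-\tfrac12\mathcal H$, hence $b(n)=-h(n)/2$ for all $n$.

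The sign statements then follow immediately from Theorem~\ref{thm:prime-detection}. If $n$ is prime, then $h(n)=0$ and so $b(n)=0$. If $n$ is composite with $4\nmid n$, then $h(n)>0$, so $b(n)<0$. If $4\mid n$, then $h(n)<0$, so $b(n)>0$.

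For the weight assertions, Proposition~\ref{prop:Bcrank} gives $\mathbb C[B_k:k\ge1]=\mathscr C^{(2)}$, and this is the same graded subalgebra of $\widetilde{\mathcal M}(\Gamma_0(2))$. Multiplying by the nonzero constant $-\tfrac12$ does not change the highest weight, so $\mathcal B$ has highest weight eight, since $\mathcal H$ does by Theorem~\ref{thm:prime-detection}. Minimality also transfers directly: any prime-detecting element of $\mathbb C[B_k:k\ge1]$ is a prime-detecting element of $\mathscr C^{(2)}$, so by the last part of Theorem~\ref{thm:prime-detection} its highest weight is at least eight.

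I expect no real obstacle beyond confirming the sign and scaling constants in the substitution.
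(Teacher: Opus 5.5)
Your proposal is correct and is essentially the paper's own argument: substituting $\mathbf C_2(-1;q)=-2B_1(q)$ into \eqref{eq:prime-detecting-form} gives $\mathcal B=-\mathcal H/2$, and then the sign pattern and the weight-eight minimality follow from Theorem~\ref{thm:prime-detection} together with Proposition~\ref{prop:Bcrank}. Your coefficient checks are also right: $4(-2D^2+14D-18)=-2(4D^2-28D+36)$ and $16\cdot(-2)^3=-2\cdot 64$.
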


\begin{proof}
Substituting $\mathbf C_2(-1;q)=-2B_1(q)$ into \eqref{eq:prime-detecting-form}, we obtain
$\mathcal B=-\mathcal H/2$.
The assertions follow from Theorem~\ref{thm:prime-detection} and
Proposition~\ref{prop:Bcrank}.
\end{proof}


\section*{Acknowledgments}
The author thanks Toshiki  Matsusaka for suggesting
that the resolution of the problem left open in
\cite{KMS} be stated explicitly, as in
Corollary~\ref{cor:B-prime-detection}.
This work was supported by the Basic Science Research Program through the National Research Foundation of Korea (NRF) funded by the Ministry of Education (RS-2025-25415913) and the NRF grant funded by the Korea government (MSIT) (RS-2026-25606469).


\end{document}